\theoremstyle{plain}
\newtheorem{theorem}{Theorem}[section]
\newtheorem{fact}[theorem]{Fact}
\newtheorem{proposition}[theorem]{Proposition}
\newtheorem{claim}[theorem]{Claim}
\newtheorem{corollary}[theorem]{Corollary}
\newtheorem{lemma}[theorem]{Lemma}
\newtheorem{observation}[theorem]{Observation}
\newtheorem{conjecture}[theorem]{Conjecture}
\newtheorem*{theorem*}{Theorem}
\newtheorem*{corollary*}{Corollary}
\theoremstyle{definition}
\newtheorem{remark}[theorem]{Remark}
\newtheorem{definition}[theorem]{Definition}
\title{Finite matchability under the matroidal Hall's condition}
\author{Attila Jo\'{o}}
\thanks{Funded by the Deutsche Forschungsgemeinschaft (DFG, German
Research Foundation)-513023562 and partially by NKFIH OTKA-129211}
\address{Attila Jo\'{o},
Department of Mathematics, University of Hamburg, Bundesstra{\ss}e 55 (Geomatikum), 20146 Hamburg, Germany}
\email{attila.joo@uni-hamburg.de}
\address{Attila Jo\'{o},
Logic, Set theory and topology department, Alfr\'{e}d R\'{e}nyi Institute of Mathematics,  13-15 Re\'{a}ltanoda St., 
Budapest, Hungary}
\email{jooattila@renyi.hu}
\keywords{infinite matroid, matroidal matching, matroid intersection}
\subjclass[2020]{Primary 05B35. Secondary 03E05, 05C63} 
\begin{document}

\begin{abstract}
Aharoni and Ziv conjectured that if $ M $ and $ N $ are finitary matroids on $ E $, then a certain ``Hall-like'' condition 
is sufficient to guarantee the existence of an $ M $-independent spanning set of 
$ N $. We show that their condition ensures that every finite subset of $ E $ is $ N $-spanned by an $ M $-independent set. 
\end{abstract}
\maketitle

\section{Introduction}
Hall's classical theorem \cite{hall1935onrep} states that a finite bipartite graph $ G=(S,T,E) $ has no matching that covers $ T $ iff there exists an  
$ 
X\subseteq T $ with $ \left|N(X)\right| < \left|X\right|$. The literal generalization of this theorem to infinite graphs fails. Indeed, in the classical 
counterexample (``playboy paradox'') 
 $ 
\left|S\right|=\left|T\right|=\aleph_0 $, there is a $ t\in T $ that is connected to each point in $ S $ and $ E(G-t) $ is a 
perfect matching of $ G-t $. On the one hand, no matching covers $ T $ because each $ t'\in T-t $ has a unique neighbour and  $ 
N(T-t) = S $. On the other hand, $ \left|N(X)\right| \geq \left|X\right|$ for every $ X\subseteq T $. The ``right'' generalization of Hall's 
theorem which is also valid for infinite graphs was discovered by Aharoni. The key idea is that the 
sizes of $ X $ and $ N(X) $ should be 
compared via injections along 
graph edges:
\begin{theorem}[{Infinite Hall's theorem by Aharoni, \cite[Theorem 3.2]{aharoni1991infinite}}]\label{thm: infinite Hall}
A bipartite graph $ G=(S,T,E) $ has a matching that covers $ T $ iff there is no $ X\subseteq T $ such that $ N(X) $ 
can be matched to $ X $ 
and no matching covers $ X $.
\end{theorem}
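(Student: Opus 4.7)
The easy direction is immediate. Suppose $X \subseteq T$ satisfies the two properties, and towards a contradiction that some matching $M$ covers $T$. Then $M$ covers $X \subseteq T$ along edges into $N(X)$, so the restriction of $M$ to $X$ is itself a matching covering $X$, contradicting property~(b).

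For the hard direction, assume no matching covers $T$ and I want to construct a suitable $X$. My plan is to apply Zorn's lemma to select a matching $M$ that is maximal in an appropriate extensibility order (intuitively, one in which every ``augmenting'' improvement has been performed), and then to set
\[
X = \{\,t \in T : t \text{ is reached by some } M\text{-alternating walk starting at an } M\text{-unsaturated vertex of } T\,\}.
\]
Since an alternating walk leaves $T$ along a non-matching edge and returns along a matching one, every $s \in N(X)$ is itself reachable by such a walk. The maximality of $M$ should force every such $s$ to be $M$-saturated (otherwise the walk extends to an improvement of $M$), and its $M$-partner necessarily lies in $X$. This produces a matching from $N(X)$ into $X$, giving property~(a).

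The crux is property~(b): no matching covers $X$. Assume for contradiction $M'$ covers $X$ and inspect $M \triangle M'$ on the subgraph spanned by $X \cup N(X)$. In the finite case the components of $M \triangle M'$ are cycles and finite paths, and an $M$-unsaturated vertex in $X$ forces the presence of an augmenting finite path, contradicting the maximality of $M$. In the infinite setting $M \triangle M'$ may additionally contain \emph{double rays}, two-way-infinite alternating paths which do not in themselves augment $M$. This is the main obstacle, and it is the reason naive maximality arguments fail. To overcome it I would invoke Aharoni's machinery of \emph{waves}, i.e.\ matchings $W$ with $N(\mathrm{ran}(W)) \subseteq \mathrm{dom}(W)$, and the fact that every bipartite graph admits a maximal wave. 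Using a maximal wave one can ``cut off'' the double-ray components of $M \triangle M'$ and re-splice the matching to obtain a genuine improvement of $M$, contradicting the choice of $M$ and completing the hard direction.

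In summary, the outer skeleton (Zorn's lemma, maximal matching, alternating reachability) mirrors the finite Hall proof, but the essential new difficulty is controlling doubly-infinite alternating paths in symmetric differences of matchings; overcoming this via waves is the technical core of the argument and the reason Aharoni's theorem is substantially deeper than its finite predecessor.
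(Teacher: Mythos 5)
The paper does not prove this result; Theorem~\ref{thm: infinite Hall} is quoted verbatim from Aharoni's paper as background, so there is no in-paper argument to compare against. Judged on its own terms, your proposal is a plausible sketch of the overall geometry but leaves the genuinely hard steps unaddressed, and in at least two places the gap is not a matter of polish.

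First, the ``appropriate extensibility order'' for Zorn's lemma is never specified, and the obvious candidates fail. Maximality under inclusion does not rule out finite augmenting paths, so it gives nothing. The more promising preorder ``$M_1\le M_2$ iff every $T$-vertex saturated by $M_1$ is saturated by $M_2$'' is not chain-complete: in the playboy graph, one can produce an increasing $\omega$-chain of matchings whose saturated $T$-sets exhaust $T$, yet no matching covers $T$, and the pointwise $\liminf$ of the edge sets need not saturate the limit of the saturated sets because the matching partners keep moving. Producing a matching that admits no finite augmenting path---or, more to the point, a matching from which the set $X$ you define actually yields a hindrance---is exactly where the infinitary difficulty sits, and declaring that Zorn's lemma handles it is circular. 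Second, the whole of property~(b) rests on the sentence about using a maximal wave to ``cut off the double-ray components and re-splice.'' That is a description of the difficulty, not a resolution of it. In fact, a double ray in $M\triangle M'$ is precisely the configuration for which no local re-splicing improves $M$; converting a maximal wave into a device that eliminates such components is the technical core of Aharoni's argument and occupies the bulk of his paper. Your definition of a wave as a matching $W$ with $N(\mathrm{ran}\,W)\subseteq \mathrm{dom}\,W$ is also not what is needed: the objects in Aharoni's proof are matchings of $N(X)$ into $X$ for suitable $X\subseteq T$, maximised in a different sense, and the crucial lemma is that such a maximal wave can be split off while preserving unhinderedness of the remainder, which then feeds a transfinite recursion. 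None of that structure appears in your outline. In short, the easy direction is correct, the framing of the hard direction in terms of alternating reachability and waves points in the right direction, but the two load-bearing steps---existence of the maximal object and the double-ray argument---are both asserted rather than proved, and those are exactly the steps that distinguish this theorem from the finite Hall theorem.
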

\noindent Theorem \ref{thm: infinite Hall} turned out to be an efficient tool, the infinite version of König's Duality Theorem was derived from it 
\cite{aharoni1984konig}, i.e. every bipartite 
graph (finite or infinite) 
has a 
matching $ F $ such that one can obtain a vertex-cover by choosing precisely one endpoint from each $ e \in F $. Other types of criteria of 
matchability were introduced by Nash-Williams \cite{nash1975marriages}. For more on infinite matching theory (including the discussion of the 
non-bipartite case) we 
refer to \cite{aharoni1991infinite}.

Bipartite matchings are special cases of common independent sets of two matroids. Let $ G=(S,T,E) $ be a bipartite graph. For $ v 
\in S\cup T $, let $ U_{\delta(v), 1} $ be the $ 1 $-uniform matroid (i.e. the singletons and the empty set are the independent sets) on the set 
$ \delta(v) $ of edges incident with $ v $. Consider the direct sums  

 \[M:= \bigoplus_{s \in S}U_{\delta(s), 1}\ \   N:= \bigoplus_{t \in T}U_{\delta(t), 1}. \]
 Then matchings of $ G $ are exactly the common independent sets of $ M $ and $ N $. Moreover, if  $ G $ has no isolated 
 vertices in $ T $, then a matching $ F $ covers $ T $ iff it is a base of $ N $. This led Aharoni and Ziv 
 to call in  \cite{aharoni1998intersection} the common independent sets of two matroids ``matroidal matchings''. Let us call the 
 pair $ (M,N) $ of matroids matchable if there exists an $ M $-independent  base of $ N $ and finitely matchable if every finite set is $ N 
 $-spanned by an $ M $-independent set. The following matroidal generalization of Hall's theorem was obtained by Edmonds (implicit in 
 \cite{edmonds2003submodular}):
 
 \begin{theorem}[Edmonds]\label{thm: matroidal matching finite}
Assume that $ M $ and $ N $ are matroids on the common finite edge set $ E $. Then $ (M,N) $ is matchable iff there is no  $ X\subseteq E $ 
with  $ r_{M}(X)<r_{N.X}(X) $.
\end{theorem}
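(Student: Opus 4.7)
The plan is to recast the statement as a min-max equality and then prove it by augmenting paths, following Edmonds. Since $r_{N.X}(X) = r_N(E) - r_N(E\setminus X)$, the Hall-like condition is equivalent to
\[
r_M(X) + r_N(E\setminus X) \;\geq\; r_N(E) \qquad \text{for every } X\subseteq E,
\]
while matchability of $(M,N)$ asserts that the maximum size of a common independent set equals $r_N(E)$. The necessity is immediate: for any $M$-independent base $I$ of $N$ and any $X\subseteq E$, one has $|I\cap X|\leq r_M(X)$ and $|I\setminus X|\leq r_N(E\setminus X)$, and summing gives $r_N(E) = |I| \leq r_M(X) + r_N(E\setminus X)$.

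For sufficiency I would start from an arbitrary common independent set $I$ and try to enlarge it repeatedly. The tool is the Edmonds exchange digraph $D_I$ on vertex set $E$ whose arcs record the one-element $M$- and $N$-exchanges available at $I$, together with the source set $S_M := \{y\notin I : I+y \text{ is } M\text{-independent}\}$ and the target set $S_N := \{y\notin I : I+y \text{ is } N\text{-independent}\}$. The heart of the argument is the classical augmenting-path lemma: if $D_I$ contains a directed path from $S_M$ to $S_N$, then choosing a shortest such path $P$, the symmetric difference $I' := I\triangle V(P)$ is again a common independent set and $|I'| = |I|+1$. Verifying independence of $I'$ in both $M$ and $N$ is the technical core of the proof and the main obstacle to handle carefully; it uses the minimality of $P$ (so that no chord arcs exist) together with a multiple-exchange lemma producing the required bijections between the removed and added parts.

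If on the other hand no directed path from $S_M$ to $S_N$ exists in $D_I$, I would let $R$ denote the set of vertices reachable from $S_M$ and set $X := E\setminus R$. The combination of $R\cap S_N = \emptyset$ with the absence of outgoing $N$-arcs from $R$ forces $I\cap R$ to span $R$ in $N$, so $r_N(R) = |I\cap R|$; dually, the absence of outgoing $M$-arcs from $R$ gives $r_M(X) = |I\cap X|$. Summing yields $r_M(X) + r_N(E\setminus X) = |I|$, so the Hall-like hypothesis forces $|I|\geq r_N(E)$, i.e.\ $I$ is already an $M$-independent base of $N$, and the procedure has produced the desired witness of matchability.
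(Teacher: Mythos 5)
The paper states this theorem of Edmonds without proof (it is attributed as implicit in \cite{edmonds2003submodular}), so there is no in-paper argument to compare against. Your proof is correct and is the standard augmenting-path argument: the identity $r_{N.X}(X)=r_N(E)-r_N(E\setminus X)$ turns the Hall-type condition into $r_M(X)+r_N(E\setminus X)\geq r_N(E)$ for all $X$; necessity follows by splitting a common independent set along $X$; and when no $S_M$--$S_N$ path exists, taking $R$ to be the set reachable from $S_M$ and using that $R$ is closed under out-arcs while $S_N\cap R=\emptyset$ gives $C_N(x,I)-x\subseteq I\cap R$ for each $x\in R\setminus I$ (so $r_N(R)=|I\cap R|$), and symmetrically, since $S_M\subseteq R$ and any $M$-exchange arc from $R$ into $E\setminus R$ would extend reachability, one gets $C_M(x,I)-x\subseteq I\setminus R$ for each $x\in(E\setminus R)\setminus I$ (so $r_M(E\setminus R)=|I\setminus R|$); summing forces $|I|\geq r_N(E)$. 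The one step you leave as a black box---that a shortest, chordless $S_M$--$S_N$ path yields a larger common independent set---is exactly the content of the paper's Lemma~\ref{lem: aug path x_0 x_2n}, so deferring it is reasonable.
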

A standard way to extend the concept of matroids to allow infinite ground set  is provided by the following axiomatization (see for example 
\cite{oxley1978infinite} or 
\cite[745]{schrijver2003combinatorial}):
\begin{definition}\label{def: finitary matroid}
A finitary matroid\footnote{Some authors call it ``infinite matroid'', ``independence space'' or ``independence structure''.} is a pair  
$M=(E,\mathcal{I})$ with ${\mathcal{I} \subseteq \mathcal{P}(E)}$ such that  
\begin{enumerate}
[label=(\Roman*)]
\item\label{item: matroid axiom1} $ \emptyset\in  \mathcal{I} $;
\item\label{item: matroid axiom2} $ \mathcal{I} $ is closed under taking subsets;
\item\label{item: matroid axiom3} If $ I,J\in \mathcal{I} $ are finite with $ \left|I\right|<\left|J\right| $, then there exists an $  e\in J\setminus I $ 
such that
$ I+e\in \mathcal{I} $;
\item\label{item: matroid axiom4} If all finite subsets of  an infinite set $ X $ are in $ \mathcal{I} $, then $ X\in \mathcal{I} $.
\end{enumerate} 
\end{definition}
\noindent Theorems \ref{thm: infinite Hall} and \ref{thm: matroidal matching finite} led Aharoni and Ziv to expect that the following statement 
holds:
\begin{conjecture}[{\cite[Conjecture 3.3]{aharoni1998intersection}}]\label{conj: Hall MIC}
Assume that $ M $ and $ N $ are finitary matroids on the common edge set $ E $. Then there is no $ M $-independent base 
of $ N $ iff there exists 
an  $ X\subseteq E $ such that $ M \!\upharpoonright\! X $ has an  $ N.X $-independent base but  $ N.X $ has no $ M 
\!\upharpoonright\! X 
$-independent base.\footnote{Technically they used a slightly stronger condition they call ``unhindered'' but our ``if and only if'' formulation of 
their conjecture is known to be
equivalent.}
\end{conjecture}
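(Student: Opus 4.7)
\emph{Easy direction.} Given an $M$-independent base $B$ of $N$ and any $X\subseteq E$, the intersection $B\cap X$ is $M\upharpoonright X$-independent, and a short rank-function computation (combined with the standard fact that in any finitary matroid every spanning set contains a base) extracts from $B\cap X$ an $M\upharpoonright X$-independent base of $N.X$. No ``bad'' $X$ can therefore occur.

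\emph{Hard direction.} This is the Aharoni--Ziv conjecture itself and, as the abstract signals, is \emph{not} settled by the paper; what is proved in its place is the strictly weaker statement that the Hall-like hypothesis forces every \emph{finite} $F\subseteq E$ to be $N$-spanned by some $M$-independent set. My plan therefore targets that weaker statement, not the hard half of the conjecture. Fix a finite $F\subseteq E$ and assume no $M$-independent subset of $E$ has $F$ in its $N$-closure; the goal is to construct an $X\subseteq E$ witnessing failure of the Hall-like condition. Build $X$ as the union of a countable nested chain $F=X_0\subseteq X_1\subseteq\cdots$ of finite sets. At stage $n$, apply the finite matroid intersection theorem (Theorem~\ref{thm: matroidal matching finite}) inside $(M\upharpoonright X_n,\,N.X_n)$. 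The growth rule should enlarge $X_n$ in two coordinated ways: (a) adjoin elements that guarantee some $M\upharpoonright X_n$-independent $N.X_n$-base exists, and (b) adjoin the finite rank-defect witnesses supplied by Theorem~\ref{thm: matroidal matching finite} to block each particular attempt to produce an $M\upharpoonright X_n$-independent base of $N.X_n$. The standing hypothesis is what makes (b) feasible at every stage: by finitariness of $N$ it translates to ``for every finite $M$-independent $I\subseteq E$, some $f\in F$ lies outside $\operatorname{cl}_N(I)$'', and each such $f$ supplies a finite obstruction inside a sufficiently large $X_{n+1}$.

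\emph{Main obstacle.} The heart of the argument is arranging the recursion so that at the limit $X^*:=\bigcup_n X_n$ \emph{both} halves of the forbidden configuration hold simultaneously. The positive half (existence of an $M\upharpoonright X^*$-independent base of $N.X^*$) needs the stage-wise bases chosen in~(a) to thread coherently — each stage must commit to preserving a finite ``core'' that every later stage respects, after which finitariness of $M$ promotes the union to an independent set. The negative half (no $M\upharpoonright X^*$-independent base of $N.X^*$) needs step~(b) to have exhausted every potential candidate; this is possible precisely because, $F$ being finite, the candidates form a list accessible in countably many steps via finitariness of $N$. The same strategy does \emph{not} scale to the full conjecture because $N$-spanning of an infinite $E$ is not captured by any countable list of finite obstructions, so the finitariness reduction that drives~(b) breaks down and a genuinely new idea would be needed to reach the full hard direction.
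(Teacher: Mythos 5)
You were asked about Conjecture~\ref{conj: Hall MIC}, which the paper states but does not prove; you correctly identified this, and your easy direction is sound: if $B$ is an $M$-independent base of $N$, then $(B\cap X)\cup(E\setminus X)\supseteq B$ is $N$-spanning, so $B\cap X$ is a spanning set of $N.X$, and any maximal $N.X$-independent subset of it is an $M\!\upharpoonright\! X$-independent base of $N.X$; hence no bad $X$ exists. The problems are in your substitute plan for the weaker Theorem~\ref{t: main}. First, you have the two halves of the witness swapped: the configuration you must produce is an $X^*$ where $M\!\upharpoonright\! X^*$ \emph{has} an $N.X^*$-independent base while $N.X^*$ has \emph{no} $M\!\upharpoonright\! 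X^*$-independent base. Your step (a) tries to secure an $M\!\upharpoonright\! X_n$-independent base of $N.X_n$ while your step (b) tries to destroy exactly that same object, and your ``Main obstacle'' paragraph again labels the object to be destroyed as the ``positive half''.

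Second, and more seriously, the diagonalization in (b) does not go through even once the roles are fixed. ``$N.X^*$ has no $M\!\upharpoonright\! X^*$-independent base'' is not a countable conjunction of finite obstructions: the candidates are arbitrary (typically infinite) subsets of $X^*$, of which there are continuum many, and finitariness of $N$ only tells you that each \emph{individual} candidate fails for a finite reason, not that countably many blocking moves handle all of them --- enlarging $X_n$ to block one candidate creates new ones, which is exactly the phenomenon (cf.\ the playboy paradox) that makes the subject hard. If such a compactness argument worked, the machinery of Subsections~\ref{subsec: negli and stable}--\ref{subsec: maximal directed set} would be unnecessary. The paper's actual proof is not by contrapositive at all: it reduces to the case with no non-trivial negligible set (Claim~\ref{clm: reduction 1}), uses a maximal directed family $\mathcal{D}$ of $\sim$-classes of finite common independent sets to build an $M$-spanning set $S$ covered by stable sets (Lemma~\ref{lem: stable redaction}), merges these into a single stable $N$-independent base of $M$ (Corollary~\ref{cor: largest stable}), and only then invokes $\mathsf{Hall}(M,N)$ once, with $X=E$. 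To repair your approach you would need a genuinely new idea for the negative half; as written, the plan defers precisely the step that constitutes the whole difficulty.
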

They reduced in the same paper the Matroid Intersection Conjecture of Nash-Williams \cite[Conjecture 1.2]{aharoni1998intersection} to 
Conjecture \ref{conj: Hall MIC}. In all the partial 
results towards to Conjecture \ref{conj: Hall MIC} so far, structural or size restrictions are assumed about the matroids
\cite{aharoni1998intersection, aigner2018intersection, bowler2015matroid, joo2021MIC}. In this paper, we 
provide the first step towards Conjecture \ref{conj: Hall MIC} in the original setting by proving that $ (M,N) $ is 
finitely matchable under 
the matroidal Hall's condition, i.e. under the assumption given in Conjecture \ref{conj: Hall MIC}. More precisely, we show the 
following:
 \begin{theorem}\label{t: intromain}
Assume that $ M $ and $ N $ are finitary matroids on the common edge set $ E $ and there is no
 $ X\subseteq E $ such that $ M \!\upharpoonright\! X $ has an  $ N.X $-independent base but  $ N.X $ has no $ M 
\!\upharpoonright\! X 
$-independent base. Then $ (M,N) $ is finitely matchable, i.e. every finite  $ F\subseteq E $ is $ N $-spanned by an $ M $-independent set.
\end{theorem}

 While in Conjecture \ref{conj: Hall MIC} we want to find an $ M $-independent set that $ N $-spans the whole $ E $, in this paper 
we have 
the more modest goal to find (under the same assumption) for every finite $ F\subseteq E $ an $ M $-independent set that $ N $-spans it.  Maybe 
surprisingly, this seemingly small step already requires an 
entirely new 
machinery compared to previous partial results. We believe these new tools are important for further investigations of the conjecture.

The paper is structured as follows. In the following section, Section \ref{sec: Notation and preliminaries}, we introduce the notation and recall a 
few previous results that we need later. Section \ref{sec: Main result} is devoted to the proof of the main result and it is subdivided into three 
subsections. In Subsection \ref{subsec: negli and stable}, the concept of negligible and stable sets are introduced and a couple of reductions are 
done. A preorder on the finite common independent sets of two finitary matroids is defined in Subsection \ref{subsec: equiv classes} and the 
induced equivalence classes are investigated. It is shown (Lemma \ref{lem: reachability in D(I)}) that the ``reachable by a 
directed path'' relation corresponding to Edmonds' auxiliary digraph associated with a finite common independent set depends only on the 
equivalence class. In the last part, Subsection \ref{subsec: maximal directed set}, we factorize the preorder with this equivalence relation and take 
a maximal directed set in the resulting poset. The analysis of this poset concludes the proof of the main result Theorem \ref{t: main}.\\

\section{Notation and preliminaries}\label{sec: Notation and preliminaries}
We write $ \boldsymbol{\bigcup \mathcal{F}} $ for the union of the sets in $ \mathcal{F} $. The symmetric difference $ \boldsymbol{X \triangle 
Y} $ is defined as $ (X\setminus Y)\cup (Y\setminus X) $.  A pair ${M=(E,\mathcal{I})}$ is a \emph{finitary matroid} 
if ${\mathcal{I} \subseteq \mathcal{P}(E)}$ satisfies  the axioms 
\ref{item: matroid axiom1}-\ref{item: matroid axiom4}.  We refer to $ E $ as the edge set or ground set of the matroid.
The sets in~$\mathcal{I}$ are called \emph{independent} and the sets $ \mathcal{P}(E)\setminus \mathcal{I} $ are \emph{dependent}.
The maximal independent sets are the \emph{bases} while the minimal dependent sets are the \emph{circuits} of the matroid. 
For an  ${X \subseteq E}$, ${\boldsymbol{M  \!\!\upharpoonright\!\! X} :=(X,\mathcal{I} 
\cap \mathcal{P}(X))}$ is a matroid and it  is called the \emph{restriction} of~$M$ to~$X$.
We write ${\boldsymbol{M - X}}$ for $ M  \!\!\upharpoonright\!\! (E\setminus X) $. Let $ B $ be a base of $ M  \!\!\upharpoonright\!\! X $.
The \emph{contraction} of $ X $ in $ M $ is the matroid 
$\boldsymbol{M/X}=(E\setminus X, \mathcal{I}')$ where $ I \in \mathcal{I}' $ iff $ B\cup I \in \mathcal{I} $. This is well-defined, i.e. $ 
\mathcal{I}' $ does not depend on the choice of $ B $. Let $\boldsymbol{M.X}:= M/(E\setminus X) $.   
If $ I $ is  independent in $ M $  but $ I+e $ is dependent for some $ e\in E\setminus I $,  then there is a unique 
circuit   $ \boldsymbol{C_M(e,I)} $ of $ M $ through $ e $ contained in $ I+e $.  We say~${X 
\subseteq E}$ \emph{spans}~${e \in E}$ in matroid~$M$ if either~${e \in X}$ or there exists a circuit~${C 
\ni e}$ with~${C-e \subseteq X}$. 
By letting $\boldsymbol{\mathsf{span}_{M}(X)}$ be the set of edges spanned by~$X$ in~$M$, we obtain a finitary closure 
operator\footnote{Axiom \ref{item: matroid axiom4} ensures that the closure operator $ 
\mathsf{span} $ of finitary matroids is finitary. This is in contrast to the more general matroid concept discovered independently by Higgs and 
Bruhn et al. \cite{bruhn2013axioms}.} 
$ \mathsf{span}_{M}: \mathcal{P}(E)\rightarrow \mathcal{P}(E) $. 
An ${S \subseteq E}$ is \emph{spanning} in~$M$ if~${\mathsf{span}_{M}(S) = E}$.  

 \begin{fact}\label{fact: finite subset spans}
If $ M $ is a finitary matroid and $F,X\subseteq E(M)  $ with $ F \subseteq \mathsf{span}_M(X) $ where $ F $ is finite, then there is already a 
finite $ 
X'\subseteq X $ with $ F \subseteq \mathsf{span}_M(X') $.
\end{fact}

Let an ordered pair $ 
(M,N) $ of finitary matroids defined on $ E $ be fixed. The \emph{matroidal Hall's condition} $ \boldsymbol{\mathsf{Hall}(M,N)} $ asserts that 
for every $ X\subseteq E $ for which there is an $ N.X 
$-independent base 
of $ M \!\!\upharpoonright\!\! X $, there is also an $ M \!\!\upharpoonright\!\! X $-independent base of $ N.X $. 

\begin{observation}\label{obs: Hall herdeit}
$ \mathsf{Hall}(M,N) $ implies that $ \mathsf{Hall}(M - X,N/X) $ holds for every $ X \subseteq E $. 
\end{observation}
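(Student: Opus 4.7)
The plan is to reduce $\mathsf{Hall}(M-X, N/X)$ at any witness set $Y \subseteq E \setminus X$ to $\mathsf{Hall}(M,N)$ applied at the same $Y$. This should go through directly once one unpacks the definitions: since $Y$ is disjoint from $X$, the operations ``restrict $M-X$ to $Y$'' and ``contract in $N/X$ to $Y$'' collapse to the corresponding operations on $M$ and $N$ themselves.

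First I would record the two key identities. For the $M$-side, since $M - X = M \!\upharpoonright\! (E \setminus X)$ and $Y \subseteq E \setminus X$, one immediately has
\[
  (M-X)\!\upharpoonright\! Y \;=\; M \!\upharpoonright\! Y.
\]
For the $N$-side, I would use the standard ``contraction of a contraction'' identity $(N/X)/Z = N/(X \cup Z)$ for disjoint $X, Z$. Applying this with $Z = (E \setminus X) \setminus Y$ and noting that $X \cup Z = E \setminus Y$ (because $Y \subseteq E \setminus X$), one obtains
\[
  (N/X).Y \;=\; (N/X)\,/\,\bigl((E\setminus X)\setminus Y\bigr) \;=\; N\,/\,(E\setminus Y) \;=\; N.Y.
\]

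With these two identities in hand, the statement of the observation becomes essentially tautological. Fix $X \subseteq E$, assume $\mathsf{Hall}(M,N)$, and let $Y \subseteq E \setminus X$ be an arbitrary candidate witness for $\mathsf{Hall}(M-X, N/X)$. Suppose $(M-X)\!\upharpoonright\! Y$ has an $(N/X).Y$-independent base; by the identities this means $M\!\upharpoonright\! Y$ has an $N.Y$-independent base. Then $\mathsf{Hall}(M,N)$, applied to the subset $Y$ of $E$, yields an $M\!\upharpoonright\! Y$-independent base of $N.Y$, which by the same identities is an $(M-X)\!\upharpoonright\! Y$-independent base of $(N/X).Y$, as required.

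There is essentially no obstacle here; the only point that deserves a line of justification is the contraction identity $(N/X)/Z = N/(X \cup Z)$ for disjoint $X,Z$, which is standard for finitary matroids and can be verified from Definition~\ref{def: finitary matroid} by picking bases of $X$ and of $X \cup Z$ and comparing independent sets. Everything else is bookkeeping, so the observation really just says that the Hall condition is preserved by the simultaneous operations of deleting in $M$ and contracting in $N$ along the same set.
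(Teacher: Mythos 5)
Your argument is correct and is exactly the unpacking the paper implicitly relies on (the observation is stated without proof). The two identities $(M-X)\!\upharpoonright\! Y = M\!\upharpoonright\! Y$ and $(N/X).Y = N.Y$ for $Y \subseteq E\setminus X$ reduce each instance of the Hall condition for $(M-X,\,N/X)$ to the corresponding instance of $\mathsf{Hall}(M,N)$, and the standard identity $(N/X)/Z = N/(X\cup Z)$ you invoke does hold for finitary matroids with the paper's definition of contraction.
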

\noindent We say that $ (M,N) $ is \emph{finitely 
matchable} if for every finite $ F\subseteq E $ there is an $ I \in \mathcal{I}_M $ with $ F \subseteq 
\mathsf{span}_N(I) $. Note that by Fact \ref{fact: finite subset spans} $ I $ can be chosen to be finite.

The classical ``augmenting path technique'' developed by Edmonds \cite{edmonds2003submodular} to prove his Matroid intersection theorem 
will play an 
important role in our proofs. For $ I \in \mathcal{I}_M \cap \mathcal{I}_N $, let 
$ \boldsymbol{D(I)}=D_{(M,N)}(I) $ be a digraph on $ E $ with the following arcs. For  $ x\in E\setminus I $ and $ y\in I $,  
$ xy\in D(I) $  if   $ x\in \mathsf{span}_M(I) $ with $ y\in C_M(x,I) $ and $ yx\in D(I) $   if   $ x\in \mathsf{span}_N(I) 
$ with 
$ y\in C_N(x,I) $.  We call these the arcs of  \emph{first type} and arcs of \emph{second type} respectively. For $ X\subseteq E $, we write $ 
\boldsymbol{E(X,I)} $ for 
the set of those elements of $ E $ 
that are reachable from $ X $ in $ D(I) $ by a directed path\footnote{By directed path we always mean forward-directed path.}. The 
set of 
arcs entering (leaving) vertex set $ X $  in digraph 
$ 
D $ is denoted by $ 
\boldsymbol{\delta_D^{-}(X)} $  ($ \boldsymbol{\delta_D^{+}(X)} $)  and we write $ \boldsymbol{N_D^{-}(X)} $ and $ 
\boldsymbol{N_D^{+}(X)} $ respectively for the in-neighbours and out-neighbours of $ X $, i.e. the tails of the arcs in $ \delta_D^{-}(X) $ and 
the heads of the arcs in $ \delta_D^{+}(X)$. 
An 
\emph{augmenting path} for $ I $  is a directed path in $ D(I) $ from $  E\setminus 
\mathsf{span}_{N}(I) $ to $  E\setminus \mathsf{span}_{M}(I) $ without ``jumping arcs''. More precisely, it is a 
set  $ P=\{ x_0,\dots, x_{2n} \}\subseteq E $ such that 
\begin{enumerate}
\item $ x_0 \in E\setminus \mathsf{span}_{N}(I) $,
\item $ x_{2n}\in E\setminus \mathsf{span}_{M}(I) $,
\item $ x_ix_{i+1}\in D(I) $ for $ i<2n $,
\item $ x_i x_j \notin D(I) $ if $  i+1<j \leq 2n $.
\end{enumerate}
\noindent An $ xy$\emph{-augmenting path} is an augmenting path starting at $ x $ and terminating at $ y $. An  
$ x $\emph{-augmenting path} is an augmenting path starting at $ x $.  The following lemma is due to Edmonds (for a proof with our notation see 
\cite[Corollary 
3.2]{joo2021MIC}).
\begin{lemma}\label{lem: aug path x_0 x_2n}
If $ P=\{ x_0,\dots, x_{2n} \} $ is an augmenting path for $ I\in \mathcal{I}_M \cap \mathcal{I}_N $, then $ I \triangle P \in 
\mathcal{I}_M \cap \mathcal{I}_N $ with $ \mathsf{span}_{M}(I \triangle P)=\mathsf{span}_{M}(I+x_{2n}) $ and 
$ \mathsf{span}_{N}(I \triangle P)=\mathsf{span}_{N}(I+x_{0})  $.
\end{lemma}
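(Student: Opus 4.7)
The plan is to reduce the lemma to a clean ``upper\nobreakdash-triangular exchange'' fact, which I then apply twice: once to $M$ and, after reversing the path, once to $N$.

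First I would note that first\nobreakdash-type arcs go from $E\setminus I$ into $I$ while second\nobreakdash-type arcs go the opposite way. Since $x_0\notin \mathsf{span}_N(I)\supseteq I$, the initial arc $x_0x_1$ is of first type; inductively along $P$, $\{x_0,x_2,\dots,x_{2n}\}\subseteq E\setminus I$ and $\{x_1,x_3,\dots,x_{2n-1}\}\subseteq I$, the $x_i$ are pairwise distinct, $x_{2i}x_{2i+1}$ ($i<n$) is of first type and $x_{2i+1}x_{2i+2}$ ($i<n$) of second type. In particular $I\triangle P=(I\setminus\{x_1,x_3,\dots,x_{2n-1}\})\cup\{x_0,x_2,\dots,x_{2n}\}$.

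Next I would prove the following exchange lemma for an arbitrary finitary matroid $K$: if $J\in\mathcal{I}_K$, the $z_i\in J$ are pairwise distinct ($i<n$), the $y_i\in\mathsf{span}_K(J)\setminus J$ are pairwise distinct ($i<n$), $z_i\in C_K(y_i,J)$ for all $i$, and $z_j\notin C_K(y_i,J)$ whenever $j>i$, then $J':=(J\setminus\{z_i\}_{i<n})\cup\{y_i\}_{i<n}\in\mathcal{I}_K$ with $\mathsf{span}_K(J')=\mathsf{span}_K(J)$. The proof is induction on $n$: swap $z_{n-1}$ for $y_{n-1}$ using $z_{n-1}\in C_K(y_{n-1},J)$ to get an independent $J''$ with the same span; the triangularity hypothesis forces $z_{n-1}\notin C_K(y_i,J)$ for $i<n-1$, so each remaining $C_K(y_i,J)$ is entirely contained in $J''+y_i$ and hence equals $C_K(y_i,J'')$, which keeps all hypotheses intact and lets the induction close.

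Applying this with $K=M$, $y_i=x_{2i}$, $z_i=x_{2i+1}$ ($i<n$): the first\nobreakdash-type arcs supply $z_i\in C_M(y_i,I)$, and condition~(4) of the augmenting\nobreakdash-path definition forbids $x_{2i}x_{2j+1}\in D(I)$ for $j>i$, i.e.\ $z_j\notin C_M(y_i,I)$ for $j>i$. The exchange lemma yields $I_M:=(I\setminus\{x_1,\dots,x_{2n-1}\})\cup\{x_0,\dots,x_{2n-2}\}\in\mathcal{I}_M$ with $\mathsf{span}_M(I_M)=\mathsf{span}_M(I)$, and since $x_{2n}\notin\mathsf{span}_M(I)$ we get $I\triangle P=I_M+x_{2n}\in\mathcal{I}_M$ together with $\mathsf{span}_M(I\triangle P)=\mathsf{span}_M(I+x_{2n})$. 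For $N$, reversing the enumeration and applying the same lemma to $K=N$ with $y_i=x_{2n-2i}$, $z_i=x_{2n-2i-1}$ works identically: the second\nobreakdash-type arcs give $z_i\in C_N(y_i,I)$, condition~(4) after reindexing becomes $z_j\notin C_N(y_i,I)$ for $j>i$, and appending $x_0\notin\mathsf{span}_N(I)$ closes out $N$\nobreakdash-independence and the corresponding span identity. The only delicate step I foresee is the bookkeeping that turns the no\nobreakdash-jumping condition into the upper\nobreakdash-triangular condition in the reversed orientation for $N$; everything else is a routine check.
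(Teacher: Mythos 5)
Your proof is correct. Note, however, that the paper does not supply its own proof of this lemma: it attributes the result to Edmonds and refers the reader to a cited source for a proof in matching notation, so there is no proof in the paper itself to compare against. Your argument is the classical one: identify the alternating first/second type structure of the arcs along $P$ (so that $x_{2i}\notin I$, $x_{2i+1}\in I$), isolate the ``upper\nobreakdash-triangular'' exchange fact that the no\nobreakdash-jumping condition (4) delivers, prove it by the standard induction that swaps the last pair $(z_{n-1},y_{n-1})$ and observes that the remaining fundamental circuits survive unchanged, and then append the free element ($x_{2n}$ for $M$, $x_0$ for $N$) to get the span identity. The bookkeeping for $N$ that you flagged as the delicate step is fine as stated: writing $y_i=x_{2n-2i}$, $z_i=x_{2n-2i-1}$, the potential ``bad'' arc $z_j\to y_i$ with $j>i$ corresponds to $x_a x_b$ with $a+1<b\le 2n$, which condition~(4) excludes, so the triangularity hypothesis is met verbatim. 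The one point worth making explicit in a write\nobreakdash-up is that $y_0=x_{2n}$ does lie in $\mathsf{span}_N(I)\setminus I$ (this is forced by the second\nobreakdash-type arc $x_{2n-1}x_{2n}$, not by the path endpoints), which you use implicitly when applying the exchange lemma for $N$.
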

 \begin{lemma}[Edmonds, \cite{edmonds2003submodular}]\label{lem: Edmonds}
If $ E $ is finite, then  $ I\in \mathcal{I}_M \cap \mathcal{I}_N $ is a largest common independent set iff there is no augmenting path for $ I $.
\end{lemma}

\begin{lemma}[{\cite[Lemma 3.1]{joo2021MIC}}]\label{lem: arc pres aug}
If  $ P $ is an augmenting path for $ I $ and $ x \in E $ such that $ P \cap (\{ x \}\cup N^{+}_{D(I)}(x))=\emptyset $, then  $ 
N^{+}_{D(I)}(x)\subseteq 
N^{+}_{D(I \triangle P)}(x) $.
\end{lemma}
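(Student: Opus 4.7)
My plan is to proceed by case analysis on whether $x$ lies in $I$, since this determines the type of the out-arcs of $x$ in $D(I)$. Observe first that $x \notin P$ gives $x \in I \iff x \in I'$ for $I' := I \triangle P$, and that by Lemma~\ref{lem: aug path x_0 x_2n} both $\mathsf{span}_M$ and $\mathsf{span}_N$ only grow when passing from $I$ to $I'$.

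If $x \notin I$, then the out-arcs of $x$ in $D(I)$ are first-type, so $\delta^+_{D(I)}(x) = C_M(x, I) - x$ (taken empty when $x \notin \mathsf{span}_M(I)$). The hypothesis places $C_M(x, I) - x$ inside $I \setminus P \subseteq I'$, so $C_M(x, I) \subseteq I' + x$ serves as the unique $M$-circuit in $I' + x$, i.e., $C_M(x, I) = C_M(x, I')$, and every first-type arc survives.

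The substantive case is $x \in I$: the out-arcs are second-type arcs $xz$ with $z \in \mathsf{span}_N(I) \setminus I$ and $x \in C_N(z, I)$, and the goal is $x \in C_N(z, I')$. An immediate consequence of the hypothesis together with $x_{2k} \in P \cap (E \setminus I)$ is that $x \notin C_N(x_{2k}, I)$ for every $k=1,\ldots,n$. I plan to factor the augmentation into the canonical elementary exchanges
\[ I_0 := I + x_0, \qquad I_k := I_{k-1} - x_{2k-1} + x_{2k} \quad (k=1,\ldots,n), \]
so $I_n = I'$, and to prove by induction on $k$ the invariants that $I_k$ is $N$-independent with $\mathsf{span}_N(I_k) = \mathsf{span}_N(I_0) \ni z$, that $C_N(x_{2j}, I_k) = C_N(x_{2j}, I)$ for all $j > k$ (so that $x$ continues to avoid these circuits), and that $x \in C_N(z, I_k)$. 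In the inductive step from $I_{k-1}$ to $I_k$ the $C_N(z,\cdot)$ invariant is trivial when $x_{2k-1} \notin C_N(z, I_{k-1})$, since then $C_N(z, I_{k-1}) \subseteq I_k + z$ is already the fundamental $N$-circuit there; otherwise strong circuit elimination applied to $C_N(z, I_{k-1})$ and $C_N(x_{2k}, I_{k-1})$ at their common element $x_{2k-1}$, with witness element $x \in C_N(z, I_{k-1}) \setminus C_N(x_{2k}, I_{k-1})$, delivers an $N$-circuit in $I_k + z$ containing $x$, and uniqueness forces it to equal $C_N(z, I_k)$.

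The main obstacle I expect is maintaining the invariant $C_N(x_{2k}, I_{k-1}) = C_N(x_{2k}, I)$ across the induction; this is precisely where the ``no jumping arcs'' condition on $P$, in the form $x_{2m-1} \notin C_N(x_{2k}, I)$ for all $m < k$, is indispensable, since it guarantees $C_N(x_{2k}, I) \subseteq I_{k-1} + x_{2k}$, so that the identity of the fundamental $N$-circuits follows from the general uniqueness of fundamental circuits in a finitary matroid.
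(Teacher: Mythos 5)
Your proof is correct. Note that the paper does not prove Lemma~\ref{lem: arc pres aug} itself but cites it from \cite[Lemma 3.1]{joo2021MIC}, so there is no in-paper proof to compare against; what you wrote is a valid self-contained argument. The case $x\notin I$ is the easy one you identify: $C_M(x,I)-x\subseteq I\setminus P\subseteq I\triangle P$, so uniqueness of fundamental circuits gives $C_M(x,I)=C_M(x,I\triangle P)$ directly. For $x\in I$, factoring the augmentation into the elementary exchanges $I_0=I+x_0$, $I_k=I_{k-1}-x_{2k-1}+x_{2k}$ and tracking $x\in C_N(z,I_k)$ via strong circuit elimination is exactly the right mechanism, and your identification of where the no-jumping-arcs condition enters (keeping $C_N(x_{2k},I)\subseteq I_{k-1}+x_{2k}$ so that $C_N(x_{2k},I_{k-1})=C_N(x_{2k},I)$, hence $x\notin C_N(x_{2k},I_{k-1})$ can serve as the witness element in the elimination) and where the hypothesis $\delta^+_{D(I)}(x)\cap P=\emptyset$ enters (giving $x\notin C_N(x_{2k},I)$ for all $k\geq 1$) is precise and complete.

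One small point worth making explicit when you write this up: the two circuits $C_N(z,I_{k-1})$ and $C_N(x_{2k},I_{k-1})$ fed into strong circuit elimination are necessarily distinct (the first contains $z\notin P$, the second contains $x_{2k}$), and the resulting circuit $C_3\subseteq (I_{k-1}+z+x_{2k})-x_{2k-1}=I_k+z$ must contain $z$ since $I_k$ is $N$-independent; these are implicit in your sketch but should be stated. Also, as you essentially observe, the claim $C_N(x_{2j},I_k)=C_N(x_{2j},I)$ for $j>k$ is not really carried inductively but is a direct consequence of the no-jumping-arcs condition applied to $I_k=(I\setminus\{x_1,\ldots,x_{2k-1}\})\cup\{x_0,\ldots,x_{2k}\}$, which would streamline the exposition.
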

\noindent For $ I \in \mathcal{I}_M\cap \mathcal{I}_N $, let $ \boldsymbol{\mathcal{A}(I)} $ consist of those elements of $ \mathcal{I}_M\cap 
\mathcal{I}_N $ that can be obtained from $ I $ by  a finite iteration of applying augmenting paths, i.e. 
\[  \mathcal{A}(I)=\{ I \triangle P_0 \triangle \cdots \triangle P_{n-1}:\ n \in \mathbb{N} \wedge P_i \text{ is an augmenting path for } I 
\triangle P_0 \triangle \cdots \triangle P_{i-1} \}.  \]
\section{The main result}\label{sec: Main result}

\begin{theorem}\label{t: main}
If $ M $ and $ N $ are finitary matroids satisfying $ \mathsf{Hall}(M,N) $, then $ (M,N) $ is finitely matchable. 
\end{theorem}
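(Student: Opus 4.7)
The plan is to argue by contradiction: assume there is a finite set $F\subseteq E$ for which no $M$-independent set $N$-spans $F$, and construct from this data a subset $X\subseteq E$ witnessing the failure of $\mathsf{Hall}(M,N)$. First I would reduce to a convenient setting. By applying restriction and contraction, which preserve the Hall condition by Observation~\ref{obs: Hall herdeit}, I may assume that a starting common independent set $I_{0}\in \mathcal{I}_{M}\cap \mathcal{I}_{N}$ has been fixed and that no $I\in \mathcal{A}(I_{0})$ satisfies $F\subseteq \mathsf{span}_{N}(I)$. Lemma~\ref{lem: aug path x_0 x_2n} lets us pivot among such sets while tracking the $N$-span; $f$-augmenting paths with $f\in F\setminus \mathsf{span}_{N}(I)$ are exactly what would shrink $F\setminus \mathsf{span}_{N}(I)$, so the obstruction amounts to the absence of $f$-augmenting paths from every $I\in \mathcal{A}(I_{0})$ for some $f\in F$.

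Following the layout suggested in the introduction, I would then build an order-theoretic skeleton. First I would introduce \emph{negligible} sets (those that can be absorbed into $\mathsf{span}_{N}(I)$ without perturbing the relevant augmenting-path structure) and \emph{stable} sets (whose features are fixed under further augmentations), and perform a few small reductions isolating the essential case. Next, I would equip $\mathcal{A}(I_{0})$ with the preorder $I\preceq J \iff J\in \mathcal{A}(I)$ and pass to the induced equivalence $I\sim J$. The crucial local statement to prove here is the lemma flagged in the introduction: for $I\sim J$ and every $X\subseteq E$, the reachable sets $E(X,I)$ and $E(X,J)$ coincide. Lemma~\ref{lem: arc pres aug} is the workhorse, since it preserves arcs leaving a fixed vertex across augmentations that avoid it; one has to combine this with a careful accounting of the arcs destroyed and regenerated along the cycle of augmentations that witnesses $I\sim J$.

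With the quotient poset in hand, I would invoke Zorn's lemma to select a maximal directed subset and fix a representative $I^{\ast}$, whose relevant reachable sets are well defined at the level of its equivalence class. The Hall-witness is then read off as $X:=E(F,I^{\ast})$. The structure of first-type arcs, together with the absence of arcs entering $X$ from outside, forces $I^{\ast}\cap X$ to be an $N.X$-independent base of $M\!\!\upharpoonright\!\! X$, while maximality of the chosen directed set, combined with the absence of augmenting paths through $X$, rules out any $M\!\!\upharpoonright\!\! X$-independent base of $N.X$: such a base would furnish an augmenting path transporting us to an equivalence class strictly above the maximal one. This contradicts $\mathsf{Hall}(M,N)$ and concludes the proof.

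The step I expect to be the main obstacle is the invariance statement $E(X,I)=E(X,J)$ for $I\sim J$. The natural induction on the number of augmenting paths linking $I$ and $J$ must handle paths that pass arbitrarily close to the vertices being tracked, and only a careful bookkeeping of which arcs survive via Lemma~\ref{lem: arc pres aug} and which are regenerated relative to the new common independent set makes the induction close. Everything downstream, including the Zorn argument and the extraction of the Hall-witness, is more structural, but the simultaneous verification of both halves of $\mathsf{Hall}(M,N)$ at $X$ is likely to be delicate, since the two arc types have to be balanced against each other without collapsing the candidate witness.
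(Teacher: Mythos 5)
Your high-level roadmap (negligible/stable sets, quotient by an equivalence, Zorn's lemma on a quotient poset) matches the paper's skeleton, and you are right that the reachability-invariance lemma for $I\sim J$ is the technical linchpin. However, the core of your argument has a genuine gap. After extracting a maximal $\trianglelefteq$-directed subset $\mathcal{D}$ of the quotient poset, you propose to ``fix a representative $I^{\ast}$'' of it and then read off a Hall-witness $X:=E(F,I^{\ast})$. A directed set has no maximum in general; $\mathcal{D}$ is typically an infinite upward chain with no top class, so there is no single $I^{\ast}$ whose reachable set $E(F,I^{\ast})$ captures the limit behaviour. The paper copes with exactly this by never selecting a single representative: it introduces, for every $x\in E\setminus\mathsf{span}_N(\mathcal{D})$, a threshold class $[I_x]$ above which no $x$-augmentation exists, defines $W$ as the \emph{union} of $E(x,I)$ over all such $x$ and over all $[I]\in{[I_x]}{\uparrow_{\mathcal{D}}}$, and then uses the negligible-set reduction (Proposition~\ref{prop: G neglig}) to conclude $E=W$. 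Your sketch mentions negligible and stable sets in passing but never uses them to absorb the complement $E\setminus W$, so there is no way to close the argument.

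A second problem is the verification that your candidate $X$ violates $\mathsf{Hall}(M,N)$. You invoke ``the absence of arcs entering $X$ from outside''; for a reachable set $E(F,I)$ it is arcs \emph{leaving} $X$ that are absent, so the arc-direction you rely on is reversed, and in any case the claim that $I^{\ast}\cap X$ is an $N.X$-independent base of $M\!\!\upharpoonright\!\!X$ needs the hypothesis that all augmentations above the relevant threshold avoid $E(x,I)$ (cf.\ Lemmas~\ref{lem: E(x, I) increasing} and~\ref{lem: E(x,I) M-span}), not just that $I^{\ast}$ belongs to $\mathcal{D}$. Also, the argument ``such a base would furnish an augmenting path transporting us to a class strictly above the maximal one'' again presupposes a top class and leaves unexplained how an $M\!\!\upharpoonright\!\!X$-independent base of $N.X$ produces an augmenting path for a specific common independent set. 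Notice too that the paper never produces an explicit Hall-witness: it argues in the forward direction, showing that under $\mathsf{Hall}(M,N)$ and after the negligible reduction, one can build an $N$-independent base of $M$ from stable sets (Lemma~\ref{lem: stable redaction} and Corollary~\ref{cor: largest stable}), and then $\mathsf{Hall}(M,N)$ immediately yields an $M$-independent base of $N$. The contrapositive route you sketch would have to overcome precisely the difficulties the paper's forward machinery is built to sidestep.
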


\subsection{Reductions via negligible and stable sets}\label{subsec: negli and stable}
\begin{definition}\label{def: negli}
A set $ G\subseteq E $ is $ (M,N) $-\emph{negligible} if for all finite sets $ X \subseteq G $ and  $ Y \subseteq E\setminus G $ there 
exists an $ M/Y $-independent set which $ N $-spans $ X $.  
\end{definition}
\noindent We write simply ``negligible'' when  $ (M,N) $ is clear from the context. The empty 
set is always negligible, we call it the \emph{trivial} negligible set.
\begin{observation}\label{obs: negliDef}
In order to check the negligibility of a set, it is enough 
to consider $ X $ without $ N $-loops and $ Y $ without $ M $-loops.
\end{observation}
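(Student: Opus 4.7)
The plan is a short direct argument: given an arbitrary finite $X \subseteq G$ and $Y \subseteq E\setminus G$, I strip the $N$-loops from $X$ to form $X'$ and the $M$-loops from $Y$ to form $Y'$, apply the hypothesis (which is only assumed for such ``loop-free'' pairs), and then transport the resulting witness back.

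First I would verify that contracting $Y$ or $Y'$ produces essentially the same matroid on the common ground set $E\setminus Y$. Since $Y\setminus Y'$ consists only of $M$-loops, any base of $M\!\upharpoonright\! Y'$ is also a base of $M\!\upharpoonright\! Y$, so the definition of contraction gives $\mathcal{I}_{M/Y} = \mathcal{I}_{M/Y'}\cap \mathcal{P}(E\setminus Y)$. Moreover, every $M$-loop in $E\setminus Y'$ is also a loop of $M/Y'$, so any $M/Y'$-independent set $I$ automatically avoids $Y\setminus Y'$ and hence lies in $E\setminus Y$; consequently, $M/Y'$-independence of $I$ is equivalent to $M/Y$-independence.

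Second, I note that every $e\in X\setminus X'$ is an $N$-loop, i.e.\ $e\in\mathsf{span}_N(\emptyset)\subseteq \mathsf{span}_N(I)$ for any $I$. Thus $N$-spanning $X'$ is the same as $N$-spanning $X$.

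Combining the two points: by the reduced hypothesis applied to $X'$ and $Y'$, there is an $M/Y'$-independent set $I$ with $X' \subseteq \mathsf{span}_N(I)$. By the first step $I$ is also $M/Y$-independent, and by the second step $X \subseteq \mathsf{span}_N(I)$. The reverse implication is trivial since ``loop-free'' pairs are a special case. The only place that requires care is the loop/contraction bookkeeping in the first step, but this is routine from the definitions, so I expect no substantive obstacle.
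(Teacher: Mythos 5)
Your argument is correct: stripping the $N$-loops from $X$ costs nothing since loops are $N$-spanned by every set, and stripping the $M$-loops from $Y$ does not change the contraction because $\mathcal{I}_{M/Y'}=\mathcal{I}_{M/Y}$ (every $M$-loop is a loop of $M/Y'$, so $M/Y'$-independent sets already avoid $Y\setminus Y'$). The paper states this observation without proof precisely because this is the intended routine verification, so there is nothing to add.
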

\begin{lemma}\label{lem: negl the negl}
If $ G $ is negligible and $ (M-G, N/G) $ is finitely matchable, then $ (M,N) $ is also finitely matchable. 
\end{lemma}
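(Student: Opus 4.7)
The plan is to assemble the required witness in two pieces: a piece outside $G$ coming from the finite matchability of $(M-G,N/G)$, and a piece inside $G$ coming from the negligibility of $G$, glued together by a finitariness argument in $N$. Fix a finite $F\subseteq E$ and write $F=F_1\cup F_2$ with $F_1:=F\cap G$ and $F_2:=F\setminus G$. By finite matchability of $(M-G,N/G)$ pick a finite $(M-G)$-independent $I_2\subseteq E\setminus G$ with $F_2\subseteq \mathsf{span}_{N/G}(I_2)$; observe that $I_2$ is automatically $M$-independent.

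Next I would materialise a finite portion of a base of $N\!\!\upharpoonright\!\!G$ that witnesses this $N/G$-spanning inside $N$. Fix any base $B_G$ of $N\!\!\upharpoonright\!\!G$; then $B_G\cup I_2$ is $N$-independent and $N$-spans $F_2$. Since $N$ is finitary, for each $e\in F_2\setminus I_2$ there is a finite $N$-circuit $C_e\subseteq B_G\cup I_2+e$ through $e$, so that $B':=\bigcup_{e\in F_2\setminus I_2}(C_e\cap B_G)$ is a finite subset of $G$ satisfying $F_2\subseteq \mathsf{span}_N(I_2\cup B')$. Now apply negligibility of $G$ to the finite sets $X:=F_1\cup B'\subseteq G$ and $Y:=I_2\subseteq E\setminus G$ to obtain an $M/I_2$-independent set $J$ with $X\subseteq \mathsf{span}_N(J)$, and use finitariness of $N$ once more, together with the finiteness of $X$, to trim $J$ to a finite $J'\subseteq J$ which still $N$-spans $X$.

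Setting $I:=I_2\cup J'$, the set $I$ is finite and $M$-independent, since $I_2$ is $M$-independent and $J'$ is $M/I_2$-independent; moreover $F_1\subseteq \mathsf{span}_N(J')\subseteq \mathsf{span}_N(I)$ directly, and $F_2\subseteq \mathsf{span}_N(I_2\cup B')\subseteq \mathsf{span}_N(I)$ via $B'\subseteq \mathsf{span}_N(J')\subseteq \mathsf{span}_N(I)$ and the idempotency of $\mathsf{span}_N$. The only real subtlety is the bridging step: negligibility yields $M$-independent \emph{$N$-spanners} for finite subsets of $G$, whereas finite matchability of $(M-G,N/G)$ only delivers \emph{$N/G$-spanners}; finitariness of $N$ is precisely what allows one to extract a finite $B'\subseteq G$ transporting the $N/G$-spanning of $F_2$ to a genuine $N$-spanning, so that the two regimes can be patched together. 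Everything else is routine bookkeeping.
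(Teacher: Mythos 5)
Your argument follows essentially the same route as the paper's proof: obtain a finite $(M-G)$-independent set $N/G$-spanning $F\setminus G$, use finitariness of $N$ to find a finite $B'\subseteq G$ that converts this $N/G$-spanning into genuine $N$-spanning, invoke negligibility with $X:=(F\cap G)\cup B'$ and $Y:=I_2$, and glue. One small slip: you assert that $B_G\cup I_2$ is $N$-independent, but $I_2$ is only guaranteed $(M-G)$-independent, not $N$-independent, so this claim is unjustified (and false in general); however, you never actually use it --- what you need is only that $F_2\subseteq \mathsf{span}_N(B_G\cup I_2)$ together with the fact that $N$-circuits are finite, which suffices to extract the finite circuits $C_e$ and hence $B'$. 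Also, the final trimming of $J$ to a finite $J'$ is harmless but superfluous: the definition of finite matchability asks for an $M$-independent $N$-spanner of each finite set, with no requirement that the spanner itself be finite.
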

\begin{proof}
Let a finite $ F\subseteq E $ be given. Since $ (M-G, N/G) $ is finitely matchable, we can pick an $ (M-G) $-independent  finite $ 
I_0
$ that $ 
N/G $-spans $ F\setminus G $. The matroid $ N $ is finitary, thus there is a finite $ G_0\subseteq G $ such that $ I_0 \cup G_0 $ spans $ 
F\setminus 
G $ in $ N $. By the negligibility of $ G $, there is an $ M/I_0 $-independent $ I_1 $ that spans $G_0 \cup(F \cap G) $ in $ N $. Finally, $ I:=I_0 
\cup I_1 $ is independent in $ M $ and spans $ F $ in $ N $.
\end{proof}
\begin{lemma}\label{lem: negl iterateable}
If $ G $ is $ (M,N) $-negligible and $ G' $ is $ (M-G, N/G) $-negligible, then $ G \cup G' $ is $ (M,N) $-negligible.
\end{lemma}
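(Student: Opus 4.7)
The plan is to verify the negligibility of $G \cup G'$ directly from Definition \ref{def: negli} by chaining together the two hypotheses. Given a finite $X \subseteq G \cup G'$ and a finite $Y \subseteq E \setminus (G \cup G')$, I would decompose $X = X_G \cup X_{G'}$ with $X_G := X \cap G$ and $X_{G'} := X \setminus G \subseteq G'$, and produce witnesses $I'$ and $I''$ separately for the $G'$-part and the $G$-part, then glue them together as $I := I' \cup I''$.

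The first step is to invoke the $(M-G, N/G)$-negligibility of $G'$ with the pair $(X_{G'}, Y)$; note that $Y \subseteq E \setminus (G \cup G')$ makes this legal, i.e.\ $Y \subseteq (E\setminus G)\setminus G'$. This yields an $(M-G)/Y$-independent $I' \subseteq E \setminus (G \cup Y)$ that $N/G$-spans $X_{G'}$. A routine check (using $Y \cap G = \emptyset$) gives that $(M-G)/Y$-independence and $M/Y$-independence agree on subsets of $E\setminus(G\cup Y)$, so $I'$ is also $M/Y$-independent. From the identity $\mathsf{span}_{N/G}(I') = \mathsf{span}_N(I' \cup G) \setminus G$ together with the finitariness of $\mathsf{span}_N$, I would extract a finite $G_0 \subseteq G$ with $X_{G'} \subseteq \mathsf{span}_N(I' \cup G_0)$.

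The second step is to invoke the $(M,N)$-negligibility of $G$ with the pair $(X_G \cup G_0, \, Y \cup I')$; both are finite and $Y \cup I' \subseteq E\setminus G$. This produces an $M/(Y \cup I')$-independent set $I''$ that $N$-spans $X_G \cup G_0$. Since matroid contraction is transitive, $M/(Y \cup I') = (M/Y)/I'$, and combined with the $M/Y$-independence of $I'$ this shows that $I := I' \cup I''$ is $M/Y$-independent. For the $N$-spanning property, $I''$ directly handles $X_G$, while $X_{G'} \subseteq \mathsf{span}_N(I' \cup G_0) \subseteq \mathsf{span}_N(I' \cup I'')$ because $G_0 \subseteq \mathsf{span}_N(I'')$.

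I do not anticipate a serious obstacle: the argument is really a careful juxtaposition of two applications of the definition. The only points needing attention are the two compatibility identities between deletion/contraction and $\mathsf{span}$ used above, and the appeal to finitariness of $N$ that lets us replace the potentially infinite set $G$ appearing in the $N/G$-spanning statement by a finite subset $G_0$ before feeding the data into the $(M,N)$-negligibility of $G$.
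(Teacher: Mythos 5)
Your proposal is correct and follows essentially the same route as the paper's proof: apply the $(M-G,N/G)$-negligibility of $G'$ to get $I'$, use finitariness of $N$ to extract the finite $G_0\subseteq G$, then apply the $(M,N)$-negligibility of $G$ to the pair $(X\cap G)\cup G_0$ and $Y\cup I'$, and glue. The only difference is that you spell out a couple of compatibility facts (that $(M-G)/Y$-independence implies $M/Y$-independence, and the span identity for $N/G$) which the paper leaves implicit but are indeed needed for the final ``$I\cup I'\in\mathcal{I}_{M/Y}$ and $N$-spans $X$'' conclusion.
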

\begin{proof}
Let $ X\subseteq G \cup G' $ and $ Y\subseteq E\setminus (G\cup G') $ be finite sets. Since $ G' $ is $ (M-G, N/G) $-negligible, there is a 
 finite $ 
I'\in \mathcal{I}_{(M-G)/Y} $  that $ N/G $-spans $ X\cap G' $. The matroid $ N $ is finitary, thus there is a finite $ G_0\subseteq G $ 
such that $ I' \cup G_0 $ spans $ X\setminus G $ in $ N $. Since $ G $ is $ (M,N) $-negligible, there is an $ 
I\in \mathcal{I}_{M/(Y\cup I')} $ that $ N $-spans $  G_0 \cup (X \cap G) $. By construction,  $ I \cup I' \in \mathcal{I}_{M/Y}$ and it spans $ X 
$ in $ 
N $.
\end{proof}
\begin{observation}\label{obs: negli zorn sat}
The union of a $ \subseteq $-chain of negligible sets is negligible.
\end{observation}
\begin{proof}
Let $ \mathcal{L} $ be a chain of negligible sets and let finite sets $ X \subseteq \bigcup \mathcal{L} $ and $ Y \subseteq E\setminus (\bigcup 
\mathcal{L}) $ be given. Since $ X $ is finite, there is a single $ G \in \mathcal{L} $ with $ X\subseteq G $. The set $ G $ is negligible, thus there 
is an $ M/Y $-independent $ I $ that spans $ X $ in $ N $ and we are done.
\end{proof}

\begin{corollary}\label{cor: max negl}
There exists a maximal negligible set.
\end{corollary}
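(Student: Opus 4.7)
The plan is to apply Zorn's lemma to the family $\mathcal{N}$ of all $(M,N)$-negligible subsets of $E$ partially ordered by inclusion. First I would observe that $\mathcal{N}$ is nonempty, since the trivial negligible set $\emptyset \in \mathcal{N}$ was noted immediately after Definition \ref{def: negli}.

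Next I would verify the Zorn hypothesis. Let $\mathcal{L} \subseteq \mathcal{N}$ be a nonempty $\subseteq$-chain. The observation preceding this corollary tells us precisely that $\bigcup \mathcal{L}$ is negligible, hence $\bigcup \mathcal{L} \in \mathcal{N}$ is an upper bound for $\mathcal{L}$ in $\mathcal{N}$. (If $\mathcal{L}$ is empty, $\emptyset$ itself is an upper bound.)

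By Zorn's lemma, $\mathcal{N}$ therefore has a maximal element, which is the desired maximal negligible set. There is no real obstacle here: the two lemmas of this subsection were tailored so that negligibility is closed under taking unions of chains, and the existence of a maximal such set is an immediate Zorn application. The substantive use will come later, when one combines this maximal negligible set with Lemma \ref{lem: negl the negl} and Lemma \ref{lem: negl iterateable} to reduce the proof of Theorem \ref{t: main} to a setting where there are no nontrivial negligible sets.
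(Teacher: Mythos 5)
Your proof is correct and matches the paper's argument exactly: the paper derives the corollary by applying Zorn's lemma to the family of negligible sets, using the preceding observation that unions of $\subseteq$-chains of negligible sets are negligible, just as you do.
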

 \begin{proof}
Observation \ref{obs: negli zorn sat} ensures that Zorn's lemma is applicable.
\end{proof}
\begin{claim}\label{clm: reduction 1}
Theorem \ref{t: main} is implied by its restriction to pairs $ (M,N) $ without non-trivial negligible sets.
\end{claim}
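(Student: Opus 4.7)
The plan is to use the maximal negligible set whose existence is guaranteed by Corollary \ref{cor: max negl} and pass to a quotient in which the reduced matroids satisfy the Hall condition and admit no non-trivial negligible set. The rest is then a straightforward chaining of the three lemmas about negligible sets.

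First I would fix finitary matroids $M,N$ on $E$ with $\mathsf{Hall}(M,N)$ and invoke Corollary \ref{cor: max negl} to pick a $\subseteq$-maximal $(M,N)$-negligible set $G\subseteq E$. The pair to focus on is then $(M-G,N/G)$. By Observation \ref{obs: Hall herdeit} this pair satisfies $\mathsf{Hall}(M-G,N/G)$, so it is a legitimate candidate for the hypothesized restricted version of Theorem \ref{t: main}.

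The key point I would verify is that $(M-G,N/G)$ has no non-trivial negligible set. Suppose, for contradiction, that $G'\subseteq E\setminus G$ is a non-empty $(M-G,N/G)$-negligible set. Then Lemma \ref{lem: negl iterateable} (applied with $G$ and $G'$) gives that $G\cup G'$ is $(M,N)$-negligible. Since $G'\neq\emptyset$ and $G'\cap G=\emptyset$, we have $G\cup G'\supsetneq G$, contradicting the maximality of $G$. Hence $\emptyset$ is the only $(M-G,N/G)$-negligible set.

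Having established this, I would apply the restricted form of Theorem \ref{t: main} (which is what the claim allows us to assume) to deduce that $(M-G,N/G)$ is finitely matchable. Finally, Lemma \ref{lem: negl the negl}, applied with the negligible set $G$, lifts finite matchability from $(M-G,N/G)$ back to $(M,N)$, completing the reduction. No step looks like a serious obstacle here: the whole argument is essentially a Zorn-style maximality argument combined with the closure of the class of negligible sets under the operation of Lemma \ref{lem: negl iterateable}; the substantive content has already been packaged into those earlier lemmas and the observation on the heredity of $\mathsf{Hall}$.
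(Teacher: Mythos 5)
Your proof is correct and follows precisely the same route as the paper's: pass to $(M-G,N/G)$ for a maximal negligible $G$, verify $\mathsf{Hall}$ via Observation \ref{obs: Hall herdeit}, rule out non-trivial negligible sets via Lemma \ref{lem: negl iterateable} and maximality, then lift finite matchability back with Lemma \ref{lem: negl the negl}. The only difference is that you spell out the maximality contradiction explicitly, which the paper leaves implicit.
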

\begin{proof}
Let $ (M,N) $ be given. By Corollary \ref{cor: max negl} there exists a maximal $ (M,N) $-negligible set $ G $. On the one hand, 
 $ \mathsf{Hall}(M-G, N/G) $ holds by Observation  \ref{obs: Hall herdeit}. On the other hand, Lemma \ref{lem: negl iterateable} ensures 
 that there is no 
 non-trivial $ (M-G, N/G) $-negligible set.  By knowing the theorem for such pairs, we may conclude that $ (M-G, N/G) $ is finitely matchable. 
 But then so is $ (M,N) $ by Lemma \ref{lem: negl the negl}.
\end{proof}

\begin{definition}\label{def: stable}
A set $ I\in\mathcal{I}_M\cap \mathcal{I}_N  $ is called $ (M,N) $\emph{-stable} if  $  \mathcal{I}_{M/I}\cap \mathcal{I}_N\subseteq  
\mathcal{I}_{N/I}$.   
\end{definition}
\noindent In other words, $ I $ is stable if  whenever we take a common independent set $ J $ that is disjoint from $ I $, then the $ M 
$-independence of $ I \cup J $ implies its $ N $-independence. We say simply ``stable'' if $ (M,N) $ is clear from the context. 
\begin{remark}
The name ``stable'' was motivated by the fact that by applying augmenting paths iteratively starting with a stable $ I $, none of the augmenting 
paths 
meet $ I $. However, we are not going to use this observation directly.
\end{remark}
The empty set obviously is stable.
\begin{observation}\label{obs: union of stable}
The union of a $ \subseteq $-chain of stable sets is stable.
\end{observation}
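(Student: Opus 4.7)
The plan is to use the finitary character of both matroids to reduce any hypothetical failure of stability at $I^{*}:=\bigcup\mathcal{L}$ to a failure of stability at a single member of the chain $\mathcal{L}$.

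First I would verify that $I^{*}\in \mathcal{I}_M\cap\mathcal{I}_N$: every finite $F\subseteq I^{*}$ is already contained in some $I\in\mathcal{L}$ (since $F$ is finite and $\mathcal{L}$ is a chain), hence $F$ is $M$- and $N$-independent, and then axioms \ref{item: matroid axiom2} and \ref{item: matroid axiom4} give $I^{*}\in\mathcal{I}_M\cap\mathcal{I}_N$.

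Next, to prove $\mathcal{I}_{M/I^{*}}\cap\mathcal{I}_N\subseteq\mathcal{I}_{N/I^{*}}$, suppose towards a contradiction that there exists $J\subseteq E\setminus I^{*}$ with $J\in\mathcal{I}_N$, $I^{*}\cup J\in\mathcal{I}_M$, but $I^{*}\cup J\notin\mathcal{I}_N$. Since $N$ is finitary, there is an $N$-circuit $C\subseteq I^{*}\cup J$, and $C$ is finite. Then $C\cap I^{*}$ is finite, so using that $\mathcal{L}$ is a chain, there is an $I\in\mathcal{L}$ with $C\cap I^{*}\subseteq I$. Set $J':=C\cap J$; note $J'\subseteq J\subseteq E\setminus I^{*}\subseteq E\setminus I$.

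Now $J'\in\mathcal{I}_N$ as a subset of $J$, and $I\cup J'\subseteq I^{*}\cup J\in\mathcal{I}_M$, so $J'\in\mathcal{I}_{M/I}\cap\mathcal{I}_N$. The stability of $I$ yields $I\cup J'\in\mathcal{I}_N$. But $C=(C\cap I^{*})\cup(C\cap J)\subseteq I\cup J'$, so $I\cup J'$ contains the $N$-circuit $C$, contradicting $N$-independence. This contradiction completes the proof.

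I do not foresee any substantial obstacle: the argument is a routine compactness-style reduction that exploits the finiteness of $N$-circuits, guaranteed by axiom \ref{item: matroid axiom4}. The only mild care needed is making sure that when passing from $I^{*}$ down to $I\in\mathcal{L}$, both the $M$-independence of $I\cup J'$ and the disjointness $J'\cap I=\emptyset$ are inherited, which they are because $I\subseteq I^{*}$ and $J'\subseteq J$.
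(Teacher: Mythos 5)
Your argument is correct and follows the same strategy as the paper's: use the finitary character of $N$ to find a finite $N$-circuit in $I^{*}\cup J$, push its $I^{*}$-part into a single chain member $I\in\mathcal{L}$, and contradict the stability of $I$. The only cosmetic difference is that you restrict to $J'=C\cap J$ whereas the paper keeps $J$ itself, noting $\mathcal{I}_{M/I^{*}}\subseteq\mathcal{I}_{M/I}$; both versions are fine.
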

\begin{proof}
Let $ \mathcal{L} $ be a chain of stable sets with $ I:=\bigcup\mathcal{L} $ and suppose that $ J \in  \mathcal{I}_{M/I}\cap \mathcal{I}_N$. If $ 
J $ is dependent in $ N/I $, then there already is some $ I' \in \mathcal{L} $ such that $ J $ is $ N/I' $-dependent because  of Fact 
\ref{fact: finite subset spans}. Since 
$  \mathcal{I}_{M/I}\subseteq  \mathcal{I}_{M/I'} $, this contradicts the stability of $ I' $.
\end{proof}
Stable sets can be ``merged'' in some sense even if they do not form a $ \subseteq $-chain:
\begin{lemma}\label{lem: merge stable}
For every set $ \mathcal{F} $ of stable sets there exists a single stable set $ I\subseteq \bigcup \mathcal{F} $ with $ \bigcup 
\mathcal{F}\subseteq  \mathsf{span}_M(I) $.
\end{lemma}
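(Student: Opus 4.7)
The plan is to reduce the statement, via a Zorn's lemma argument, to a two-set merging claim: if $I$ and $F$ are both stable, then there is a stable $I' \subseteq I \cup F$ containing $I$ with $I \cup F \subseteq \mathsf{span}_M(I')$. Granting this, I would order pairs $(I,\mathcal{S})$ with $I$ stable, $\mathcal{S} \subseteq \mathcal{F}$, $I \subseteq \bigcup \mathcal{S}$ and $\bigcup \mathcal{S} \subseteq \mathsf{span}_M(I)$ by coordinatewise inclusion. Chains have upper bounds by taking unions (stability of the union follows from Observation \ref{obs: union of stable}, and the spanning property is preserved in the limit), so Zorn's lemma yields a maximal pair $(I^*,\mathcal{S}^*)$. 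If some $F \in \mathcal{F} \setminus \mathcal{S}^*$ existed, the two-set claim applied to $(I^*,F)$ would produce a stable $I^{**} \subseteq I^* \cup F$ with $I^* \cup F \subseteq \mathsf{span}_M(I^{**})$, and then $(I^{**},\mathcal{S}^* \cup \{F\})$ would strictly extend $(I^*,\mathcal{S}^*)$. Hence $\mathcal{S}^* = \mathcal{F}$ and $I^*$ is the desired set.

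For the two-set case, I would choose $B \subseteq F \setminus I$ such that $I \cup B$ is a base of $I \cup F$ in $M$ (so in particular $F \subseteq \mathsf{span}_M(I \cup B)$) and set $I' := I \cup B$. The $M$-spanning property is immediate; the content is verifying stability. So let $J \in \mathcal{I}_{M/(I \cup B)} \cap \mathcal{I}_N$. Two observations drive the argument: (i) $J \cap F = \emptyset$, because each $e \in J$ lies outside $\mathsf{span}_M(I \cup B)$ by $M$-independence of $I \cup B \cup J$, whereas $F \subseteq \mathsf{span}_M(I \cup B)$; and (ii) $F \cup J \in \mathcal{I}_M$. From (i) and (ii) we have $J \in \mathcal{I}_{M/F} \cap \mathcal{I}_N$, so stability of $F$ yields $F \cup J \in \mathcal{I}_N$, hence its subset $B \cup J$ is $N$-independent. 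Since $B \cup J$ is also $M$-independent, disjoint from $I$, and $I \cup (B \cup J)$ is $M$-independent by hypothesis, stability of $I$ upgrades this to $I \cup B \cup J \in \mathcal{I}_N$, exactly what we need.

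The only delicate point is (ii), which I expect to be the main obstacle. Suppose for contradiction that $F \cup J$ contains an $M$-circuit $C$. Since $F$ and $J$ are each $M$-independent, $C$ meets $J$ in some element $c$, and then, using $c \notin F$ by (i) together with $F \subseteq \mathsf{span}_M(I \cup B)$ and the monotonicity/idempotency of $\mathsf{span}_M$,
\[ c \in \mathsf{span}_M((F \cup J) - c) = \mathsf{span}_M(F \cup (J - c)) \subseteq \mathsf{span}_M((I \cup B) \cup (J - c)). \]
This contradicts the $M$-independence of $I \cup B \cup J$, establishing (ii) and completing the plan. The only other subtlety to check is that chains in the Zorn step genuinely have upper bounds, but this is routine once one notes that any element of $\bigcup \bigcup_\alpha \mathcal{S}_\alpha$ already lies in some $\bigcup \mathcal{S}_\alpha \subseteq \mathsf{span}_M(I_\alpha)$.
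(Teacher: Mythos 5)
The proposal is correct and takes essentially the same approach as the paper: the paper builds the set by transfinite recursion along an enumeration of $\mathcal{F}$ (with Observation \ref{obs: union of stable} handling limit stages), while you organize the same construction via Zorn's lemma on pairs $(I,\mathcal{S})$; the successor/two-set step — extending the accumulated stable set inside the union with the next member of $\mathcal{F}$ to an $M$-base, then chaining stability of the new set and then of the old — matches the paper's argument, with your step (ii) spelling out the standard fact the paper invokes implicitly when it writes ``$J\in\mathcal{I}_{M/I_\alpha}$ as well because $I_\alpha\subseteq\mathsf{span}_M(J_{\alpha+1})$''.
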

\begin{proof}
Fix an enumeration $ \mathcal{F}=\{ I_\alpha:\ \alpha<\kappa \} $. We build an increasing continuous sequence $ (J_\alpha)_{\alpha\leq\kappa} $ 
of stable sets such that $ J_\alpha $ spans $ \bigcup_{\beta<\alpha}I_\beta $ in $ M $. Clearly, $ J_0:=\emptyset $ is suitable. 
Stability is preserved at limit steps by Observation \ref{obs: union of stable}. Suppose that $ J_\alpha $ is already defined for some $ 
\alpha<\kappa 
$. Let $ J_{\alpha+1} $ be an extension of $ J_\alpha $ to a maximal $ M $-independent subset of $ J_\alpha \cup I_\alpha $. Then  $ J_{\alpha+1} 
$ is $ M $-independent and spans $ \bigcup_{\beta<\alpha+1}I_\beta $ in $ M $ by construction.  Then  $ 
J_{\alpha+1}\setminus J_\alpha $ is $ M/J_\alpha $-independent, moreover, it is also $ N $-independent because so is $ I_\alpha 
\supseteq  
J_{\alpha+1}\setminus J_\alpha $. Thus the stability of  $ J_\alpha $ ensures that  $ J_{\alpha+1}\setminus J_\alpha $ is $ N/J_\alpha 
$-independent and therefore $ J_{\alpha+1} $ is $ N 
$-independent. 

Now we show that $ J_{\alpha+1} $ is stable. Let a $ J \in  
\mathcal{I}_{M/J_{\alpha+1}}\cap \mathcal{I}_N$ be given. Then $J \in \mathcal{I}_{M/I_\alpha} $ as well because $ I_\alpha \subseteq 
\mathsf{span}_M(J_{\alpha+1}) $ by construction. Thus the stability of $ I_\alpha $  implies that $ J $ is $ N/I_\alpha $-independent. But then $ J 
$ is $ N/(J_{\alpha+1}\setminus J_\alpha) $-independent by $ J_{\alpha+1}\setminus J_\alpha \subseteq I_\alpha $. As we have already noticed, 
 $ J_{\alpha+1}\setminus J_\alpha  $ is $ N $-independent. Therefore the 
$ N $-independence of $ J 
\cup (J_{\alpha+1}\setminus J_\alpha) $ follows. But then the stability of 
$ J_\alpha $ implies that $ J_\alpha \cup [J \cup (J_{\alpha+1}\setminus J_\alpha)]=J \cup J_{\alpha+1}$ is $ N $-independent. Thus $ J $ is $ 
N/J_{\alpha+1} $-independent.
\end{proof}
\begin{corollary}\label{cor: largest stable}
There exists a stable set that $ M $-spans all stable sets.
\end{corollary}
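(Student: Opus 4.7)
The plan is to simply invoke Lemma \ref{lem: merge stable} with the family $\mathcal{F}$ taken to be \emph{all} $(M,N)$-stable subsets of $E$. This family is a legitimate set since it is a subfamily of $\mathcal{P}(E)$, so the lemma applies without any set-theoretic issue.

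Applying the lemma yields a stable set $I \subseteq \bigcup \mathcal{F}$ with $\bigcup \mathcal{F} \subseteq \mathsf{span}_M(I)$. Then for any stable $I'$ we have $I' \in \mathcal{F}$, hence $I' \subseteq \bigcup \mathcal{F} \subseteq \mathsf{span}_M(I)$, which is exactly what the corollary claims.

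Since all of the real content has already been absorbed into Lemma \ref{lem: merge stable} (the transfinite construction preserving both stability and $M$-spanning, using Observation \ref{obs: union of stable} at limit steps), there is no genuine obstacle here; the corollary is essentially a restatement of the lemma for the ``universal'' family. The only thing worth flagging in the write-up is the observation that the collection of stable sets forms a set rather than a proper class, so Zorn-style concerns do not arise.
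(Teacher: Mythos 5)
Your proof is correct and is exactly the intended argument: the paper states the corollary immediately after Lemma \ref{lem: merge stable} without a separate proof precisely because it follows by applying that lemma to the family of all stable sets, as you do.
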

 \begin{proof}
We apply Lemma \ref{lem: merge stable} with the set $ \mathcal{F} $ of all stable sets.
\end{proof}

\begin{lemma}\label{lem: stable redaction}
If there is no non-trivial $ (M,N) $-negligible set, then there is an $ M $-spanning set $ S $ such that for every $ x\in S $ there is an 
$(M\!\!\upharpoonright\!\! 
S, N \!\!\upharpoonright\!\! S) $-stable set that spans $ x $ in $ M $.
\end{lemma}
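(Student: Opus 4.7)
The plan is to let $I^{*}$ be a largest $(M,N)$-stable set supplied by Corollary \ref{cor: largest stable} and set $S:=\mathsf{span}_{M}(I^{*})$.

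First I would verify that $I^{*}$ itself serves as the required $(M\!\upharpoonright\! S, N\!\upharpoonright\! S)$-stable witness for every $x\in S$. For stability, take any test set $J\subseteq S\setminus I^{*}$: the hypothesis $J\in\mathcal{I}_{(M\upharpoonright S)/I^{*}}\cap\mathcal{I}_{N\upharpoonright S}$ unfolds to $J\in\mathcal{I}_{M/I^{*}}\cap\mathcal{I}_{N}$, and the target conclusion $J\in\mathcal{I}_{(N\upharpoonright S)/I^{*}}$ unfolds to $J\in\mathcal{I}_{N/I^{*}}$, because $J\cup I^{*}\subseteq S$ makes the restriction to $S$ invisible to these (in)dependence tests. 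Thus the $(M,N)$-stability of $I^{*}$ transfers to the restricted pair, and $I^{*}$ trivially $M$-spans every $x\in S=\mathsf{span}_{M}(I^{*})$.

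It remains to prove that $S$ is $M$-spanning in $E$; since $S$ is $M$-closed this amounts to $S=E$. I would argue by contradiction: assume $G:=E\setminus S\neq\emptyset$ and derive a contradiction by showing that $G$ is $(M,N)$-negligible. For this, given finite $X\subseteq G$ and $Y\subseteq S$ (loop-free in the relevant matroids by Observation \ref{obs: negliDef}), one must exhibit an $I\in\mathcal{I}_{M/Y}$ with $X\subseteq\mathsf{span}_{N}(I)$.

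The construction of this $I$ is the main obstacle. My strategy is to begin with $I^{*}$, first replacing it (via base exchange inside $\mathsf{span}_{M}(I^{*}\cup Y)$) by an $M$-basis $I_{0}$ of $I^{*}\cup Y$ that contains a basis of $Y$, so that $I_{0}\setminus Y$ is $M/Y$-independent and still $M$-spans $S$; and then to iterate augmenting paths for $I_{0}\setminus Y$ through Lemma \ref{lem: aug path x_0 x_2n}, with arc preservation via Lemma \ref{lem: arc pres aug}, in order to successively $N$-absorb the elements of $X$ using only elements of $G$. The leverage is the maximality of $I^{*}$: should the iteration fail to reach some $x\in X$, the corresponding structural obstruction in the auxiliary digraph $D(I_{0}\setminus Y)$, combined with the stability of $I^{*}$, ought to produce a stable common independent set whose $M$-span strictly contains $\mathsf{span}_{M}(I^{*})$, contradicting Corollary \ref{cor: largest stable}. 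In the absence of such an obstruction, a finite iteration of augmenting paths produces the required $I$, establishing the negligibility of $G$ and completing the contradiction.
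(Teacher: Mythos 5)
Your proposal takes a genuinely different route from the paper: you try to show that the maximal $(M,N)$-stable set $I^*$ from Corollary~\ref{cor: largest stable} already $M$-spans $E$, which would give the lemma with $S=E$. Your observation that $(M,N)$-stability of $I^*$ transfers to $(M\!\upharpoonright\! S, N\!\upharpoonright\! S)$-stability is correct (and becomes vacuous once $S=E$). But the crucial step --- showing that $G := E\setminus\mathsf{span}_M(I^*)$ is negligible when nonempty --- is not an afterthought; it is essentially the entire content of the lemma, and your sketch does not supply it. You assert that a failed augmenting-path iteration from $I_0\setminus Y$ toward $X$ would, ``combined with the stability of $I^*$,'' produce a stable common independent set whose $M$-span strictly contains $\mathsf{span}_M(I^*)$, but no mechanism is given. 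The $(M,N)$-stability of $I^*$ says nothing directly about the auxiliary digraph of $I_0\setminus Y$ in a contracted pair such as $(M/Y,N)$; the Hall condition inherited via Observation~\ref{obs: Hall herdeit} is for $(M-Y,N/Y)$, not $(M/Y,N)$; and the order in which the elements of $X$ are $N$-absorbed matters, since each augmentation can destroy earlier spans. There is also a circularity worry: that $\mathsf{span}_M(I^*)=E$ under the hypotheses is itself a \emph{consequence} of the lemma (via Claim~\ref{clm: reduction 2}), so asserting it directly amounts to assuming what is to be proved.

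The paper does not proceed from a maximal stable set at all. It introduces a maximal $\trianglelefteq$-directed family $\mathcal{D}$ of $\sim$-classes of finite common independent sets, shows that reachability in $D(I)$ depends only on $[I]$ (Lemma~\ref{lem: reachability in D(I)}), chooses for each $x\notin\mathsf{span}_N(\mathcal{D})$ a pivot class $[I_x]$ above which no $x$-augmentation exists within $\mathcal{D}$ (Lemma~\ref{l: [I_e]}), proves the reachability sets $E(x,I)$ grow monotonically above $[I_x]$ (Lemma~\ref{lem: E(x, I) increasing}), and only then establishes that the complement $G=E\setminus W$ of the union of these reachability sets is negligible (Proposition~\ref{prop: G neglig}); the required stable sets are finally obtained as pointwise limits of sequences along $\mathcal{D}$. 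This machinery is exactly what must replace the step you describe as ``the main obstacle'' and leave unresolved, so the proposal as written has a genuine gap at its core.
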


\begin{claim}\label{clm: reduction 2}
Lemma \ref{lem: stable redaction} implies Theorem \ref{t: main}.
\end{claim}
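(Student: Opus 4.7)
My plan is to chain the reductions of this subsection with the machinery developed in the following two subsections. Starting from any finitary pair $(M,N)$ satisfying $\mathsf{Hall}(M,N)$, I first apply Claim \ref{clm: reduction 1} to assume that $(M,N)$ admits no non-trivial negligible set. Lemma \ref{lem: stable redaction} then produces an $M$-spanning set $S \subseteq E$ in which every element is $M$-spanned by some $(M\!\!\upharpoonright\!\! S, N\!\!\upharpoonright\!\! S)$-stable set. This is precisely the structural input that the analysis in Subsections \ref{subsec: equiv classes} and \ref{subsec: maximal directed set} is designed to exploit.

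The first half of the argument will be to establish finite matchability of the restricted pair $(M\!\!\upharpoonright\!\! S, N\!\!\upharpoonright\!\! S)$. The preorder and equivalence classes on finite common independent sets introduced in Subsection \ref{subsec: equiv classes}, together with the reachability statement Lemma \ref{lem: reachability in D(I)}, and the analysis of a maximal directed set in the quotient poset in Subsection \ref{subsec: maximal directed set}, will combine with the stable covering of $S$ to yield, for every finite $F \subseteq S$, a common independent set $I \subseteq S$ of $(M\!\!\upharpoonright\!\! S, N\!\!\upharpoonright\!\! S)$ with $F \subseteq \mathsf{span}_{N\!\!\upharpoonright\!\! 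S}(I) \subseteq \mathsf{span}_N(I)$.

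The second half is to lift this conclusion from $S$ to all of $E$. The cleanest route is to verify that $E \setminus S$ is itself $(M,N)$-negligible; combined with the no-non-trivial-negligible-set assumption, this forces $S = E$ and removes the need for any further lifting. For finite $X \subseteq E \setminus S$ and finite $Y \subseteq S$, the $M$-spanning property of $S$ supplies finite $M$-circuits witnessing $X \subseteq \mathsf{span}_M(S)$; combining these with stable witnesses for the relevant elements of $S$ and with the finite matchability just obtained for $(M\!\!\upharpoonright\!\! S, N\!\!\upharpoonright\!\! S)$ should assemble the required $M/Y$-independent $N$-spanner of $X$.

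The main obstacle is precisely this last step, establishing the negligibility of $E \setminus S$. Constructing the $M/Y$-independent $N$-spanner must respect the forbidden set $Y$ while leveraging both the stable covering of $S$ and the $M$-circuits connecting $X$ to $S$; this is where the delicate interplay between Hall's condition and the Edmonds augmenting-path machinery developed in the two subsequent subsections enters, and where the non-triviality of the reduction really sits.
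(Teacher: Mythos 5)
Your plan misconstrues the logical architecture of the paper and, as you yourself acknowledge, leaves the central step unresolved, so it does not constitute a proof. Two specific issues.

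First, Subsections \ref{subsec: equiv classes} and \ref{subsec: maximal directed set} are not tools to be invoked \emph{inside} the proof of Claim \ref{clm: reduction 2}; they constitute the proof of Lemma \ref{lem: stable redaction}. Once Lemma \ref{lem: stable redaction} is taken as a black box, the proof of Claim \ref{clm: reduction 2} is short and needs only the material already developed in Subsection \ref{subsec: negli and stable}, in particular Corollary \ref{cor: largest stable}. The argument you should be looking for: after reducing (via Claim \ref{clm: reduction 1}) to the case with no non-trivial negligible set, take $S$ from Lemma \ref{lem: stable redaction} and then, by Corollary \ref{cor: largest stable}, a single $(M\!\upharpoonright\!S, N\!\upharpoonright\!S)$-stable set $I$ that $M\!\upharpoonright\!S$-spans every $(M\!\upharpoonright\!S, N\!\upharpoonright\!S)$-stable set. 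Lemma \ref{lem: stable redaction} says every $x\in S$ is $M$-spanned by some such stable set, hence by $I$; and $S$ is $M$-spanning, so $I$ is $M$-spanning in $E$. Since $I$ is also $N$-independent, it is an $N$-independent base of $M$, and applying $\mathsf{Hall}(M,N)$ with $X=E$ (so $M\!\upharpoonright\!X=M$ and $N.X=N$) immediately yields an $M$-independent base of $N$, i.e.\ $(M,N)$ is matchable, a fortiori finitely matchable. No detour through finite matchability of the restricted pair or negligibility of $E\setminus S$ is needed.

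Second, even taken on its own terms your route has a gap beyond the one you flag. You want to argue about the pair $(M\!\upharpoonright\!S, N\!\upharpoonright\!S)$ and hope to feed it back through the Hall machinery, but Observation \ref{obs: Hall herdeit} transfers $\mathsf{Hall}(M,N)$ to $\mathsf{Hall}(M-X, N/X)$, i.e.\ to the pair with $N$ \emph{contracted}, not restricted. There is no statement in the paper giving $\mathsf{Hall}(M\!\upharpoonright\!S, N\!\upharpoonright\!S)$, so the proposed ``first half'' has no obvious entry point. The missing idea is that Corollary \ref{cor: largest stable} together with Lemma \ref{lem: stable redaction} already produces an $N$-independent $M$-base, at which point $\mathsf{Hall}(M,N)$ finishes the job in one step.
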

\begin{proof}
Let $ (M,N) $ be given.  By Claim \ref{clm: reduction 1} we may assume that there is no non-trivial $ (M,N) $-negligible set. Let $ S $ be as in 
Lemma \ref{lem: stable redaction}.  By applying 
Corollary \ref{cor: largest stable}, we pick an $(M\!\!\upharpoonright\!\! S, N\!\!\upharpoonright\!\! S) $-stable set $ I $ 
that $ M \!\!\upharpoonright\!\! 
S 
$-spans all $(M\!\!\upharpoonright\!\! S, N \!\!\upharpoonright\!\! S) $-stable sets. 
By Lemma \ref{lem: stable redaction}, $ I $ is 
spanning in $ M \!\!\upharpoonright\!\! S $ and hence in $ M $ as 
well.  Obviously $ I  \in 
\mathcal{I}_M\cap \mathcal{I}_N $ because $ I \in 
\mathcal{I}_{M \upharpoonright S}\cap \mathcal{I}_{N \upharpoonright S} $ by definition. Thus $ I $ is 
an $ N $-independent base of $ M $. But then $ \mathsf{Hall}(M,N) $ ensures the existence of an $ M $-independent base of $ N $. In 
particular, $ (M,N) $ is finitely matchable (and therefore $ E=\emptyset $, since otherwise $ E $ would be a non-trivial negligible set).
\end{proof}

It remains to prove Lemma \ref{lem: stable redaction}. To do so, we need some tools.
\subsection{Equivalence classes of finite common independent sets}\label{subsec: equiv classes} 

Let $ \mathsf{Fin}(I_M\cap I_N)  $ be the set of the finite elements of $ I_M\cap I_N $. For $  I, J \in \mathsf{Fin}(I_M\cap I_N)  $, we let $ I 
\trianglelefteq J $ if $ I \subseteq 
\mathsf{span}_M(J)\cap \mathsf{span}_N(J) $. Clearly, $ \trianglelefteq $ is a preorder. Let $ I \sim J $ if $ I\trianglelefteq J 
$ and $ I \trianglerighteq J $. This is an equivalence relation on $ \mathsf{Fin}(I_M\cap I_N)  
$ and $ \trianglelefteq $ induces a partial order on $ \mathsf{Fin}(I_M\cap I_N) /\sim 
$ in the usual way. We abuse the notation and 
denote this partial order also by $ \trianglelefteq $. The following two lemmas provide a characterisation of the equivalence 
classes and a way to transition between the elements of a fixed class.
\begin{lemma}\label{l: switch cycle}
If $ O\subseteq E $ induces a chordless directed cycle in $ D(I) $ for some $ I\in \mathcal{I}_M \cap \mathcal{I}_N  $, then $ I \triangle O\in 
\mathcal{I}_M \cap \mathcal{I}_N  $ with $ I \sim  I \triangle O$. 
\end{lemma}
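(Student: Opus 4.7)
The digraph $ D(I) $ is bipartite with parts $ E\setminus I $ and $ I $; first-type arcs go from $ E\setminus I $ to $ I $ and second-type arcs go the other way. Thus any directed cycle has even length and alternates arc types, so I can write $ O=\{x_0,y_0,x_1,y_1,\dots,x_{n-1},y_{n-1}\} $ with $ x_i\in E\setminus I $, $ y_i\in I $, and cycle arcs $ x_i\to y_i $ and $ y_i\to x_{i+1} $ (indices mod $ n $). Chordlessness then translates to $ C_M(x_i,I)\cap\{y_0,\dots,y_{n-1}\}=\{y_i\} $ and $ C_N(x_i,I)\cap\{y_0,\dots,y_{n-1}\}=\{y_{i-1}\} $, while each of these circuits meets $ X:=\{x_0,\dots,x_{n-1}\} $ only in $ x_i $ (since circuits of $ M $ and $ N $ in $ I+x_i $ through $ x_i $ sit inside $ I+x_i $).

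\textbf{Independence of $ I\triangle O $.} I exchange one pair at a time: set $ I_0:=I $ and $ I_{i+1}:=I_i-y_i+x_i $. The inductive claim is $ I_i\in\mathcal{I}_M $ and $ C_M(x_i,I_i)=C_M(x_i,I) $. The point is that $ C_M(x_i,I)\subseteq I+x_i $ avoids $ \{y_0,\dots,y_{i-1}\} $ by chordlessness and meets $ X $ only in $ x_i $, so it still sits inside $ I_i+x_i $ as an $ M $-circuit through $ x_i $; uniqueness of the fundamental circuit forces equality. Then $ y_i\in C_M(x_i,I_i) $ and $ I_{i+1} $ is $ M $-independent. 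After $ n $ steps, $ I_n=I\triangle O\in\mathcal{I}_M $. The $ N $-case is identical after the cyclic relabeling $ z_i:=x_{i+1} $, which turns the second-type chordlessness pattern $ C_N(x_i,I)\cap Y=\{y_{i-1}\} $ into the first-type one $ C_N(z_i,I)\cap Y=\{y_i\} $.

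\textbf{Equivalence.} The cycle arcs themselves give $ X\subseteq\mathsf{span}_M(I)\cap\mathsf{span}_N(I) $, so $ I\triangle O\subseteq\mathsf{span}_M(I)\cap\mathsf{span}_N(I) $, i.e.\ $ I\triangle O\trianglelefteq I $. Conversely, $ |I\triangle O|=|I| $ and both are $ M $-independent; within the restriction $ M\!\upharpoonright\!\mathsf{span}_M(I) $ the set $ I\triangle O $ is therefore an independent set of full rank and hence a base, so $ \mathsf{span}_M(I\triangle O)=\mathsf{span}_M(I)\supseteq I $, and the $ N $-case is analogous. The only genuinely delicate step in the whole proof is maintaining the invariant $ C_M(x_i,I_i)=C_M(x_i,I) $: the chordlessness hypotheses are exactly what prevents the previously removed $ y_j $'s from ever having been in this circuit and prevents the previously inserted $ x_j $'s from being forced into it. Once the cycle structure above is identified, the rest is essentially bookkeeping.
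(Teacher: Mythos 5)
Your proof is correct and takes essentially the same approach as the paper: exchange the cycle's $I$/non-$I$ pairs one at a time, using chordlessness to argue that the fundamental circuits relevant to the remaining exchanges are unchanged, and then conclude span-equality. The only cosmetic differences are the direction of the recursion (forward rather than the paper's backward) and that you obtain $\mathsf{span}_M(I\triangle O)=\mathsf{span}_M(I)$ via a size/base argument inside $M\!\!\upharpoonright\!\!\mathsf{span}_M(I)$ whereas the paper reads it off directly from each single-element circuit exchange.
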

\begin{proof}
By symmetry, it is enough to show that $ I \triangle O\in \mathcal{I}_M $ with $ \mathsf{span}_M(I)=\mathsf{span}_M(I 
\triangle O) $. Let $ O=\{ x_0,\dots, x_{2n+1} \} $ where  $ x_ix_j\in D(I) $ iff $ j=i+1 $ 
(mod $ 2n+2 $) and $ x_{2n+1}\in 
I $.  For $ i\leq n+1 $ we let  \[ I_i:=(I\setminus \{ x_{2j+1}:\ n\geq j\geq n+1-i \})\cup \{ x_{2j}:\ n\geq  j\geq n+1-i \}.  \]
Note that $ I_0=I $ and $ I_{n+1}=I \triangle O $. We show by induction on $ i $ that $ I_{i}\in \mathcal{I}_M $, $ 
\mathsf{span}_M(I_i)=\mathsf{span}_M(I) $ and $ C_M(x_{2k}, I)=C_M(x_{2k}, I_i)$ for every $ 0\leq k\leq n-i $. For $ i=0 $ this is a 
tautology 
and 
for $ i=n+1 $ this is exactly what we want to prove. Suppose we know the statement for some $ i\leq n $. Then  for $ k=n-i $ we have
$ C_M(x_{2(n-i)}, I)=C_M(x_{2(n-i)}, I_i)$ and hence  $ x_{2(n-i)+1}\in C_M(x_{2(n-i)}, I_i) $. It follows that  $ 
I_{i}-x_{2(n-i)+1}+x_{2(n-i)}\in \mathcal{I}_M $  and $ I_{i}-x_{2(n-i)+1}+x_{2(n-i)} $ has the same $ M $-span as $ I_i $. But $ 
I_{i}-x_{2(n-i)+1}=I_{i+1} $ by definition, therefore $ \mathsf{span}_M(I_{i+1})=\mathsf{span}_M(I_i)=\mathsf{span}_M(I) $. Finally, $ 
C_M(x_{2k}, I)=C_M(x_{2k}, I_{i+1})$ for  $ 0\leq k\leq n-1-i $ because $ C_M(x_{2k}, I)=C_M(x_{2k}, I_{i}) $ for $0\leq k\leq n-1-i $ 
by 
induction and $ x_{2(n-i)+1}\notin C_M(x_{2k}, I) $ for $ k\leq n-1-i $ because $ O $ is chordless by assumption.

\end{proof}
We call an $ O $ as above a \emph{switching cycle} (w.r.t. $ I $ and $ (M, N) $). 
\begin{lemma}\label{l: char equi}
 $ I \sim J $ iff there are $  I_0,\dots, I_n\in 
\mathsf{Fin}(I_M\cap I_N) $ with $ I_0=I $ and $ I_n=J $ such that 
$ I_{i+1}=I_i \triangle 
O_i $ for a suitable switching cycle $ O_i $ for $ I_i $ for every $ i<n $. 
\end{lemma}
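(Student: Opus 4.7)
The $(\Leftarrow)$ direction is immediate from Lemma \ref{l: switch cycle} combined with transitivity of $\sim$: each step $I_{i+1}=I_i\triangle O_i$ preserves the $\sim$-class, so $I=I_0\sim I_n=J$.

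For $(\Rightarrow)$ the plan is induction on $|I\triangle J|$. The base case $I=J$ is trivial (empty sequence). For the induction step it suffices to exhibit a switching cycle $O$ for $I$ with $O\subseteq I\triangle J$: then $I':=I\triangle O$ satisfies $I'\sim I\sim J$ by Lemma \ref{l: switch cycle}, and since $O\subseteq I\triangle J$ we have $I'\triangle J=(I\triangle J)\setminus O$, so $|I'\triangle J|<|I\triangle J|$ and the induction hypothesis applied to $(I',J)$ finishes the argument. To manufacture $O$, I would exploit that $I\sim J$ forces $\mathsf{span}_M(I)=\mathsf{span}_M(J)$ and $\mathsf{span}_N(I)=\mathsf{span}_N(J)$, so both $I$ and $J$ are bases of the finite-rank matroids $M\upharpoonright\mathsf{span}_M(I)$ and $N\upharpoonright\mathsf{span}_N(I)$; in particular $|J\setminus I|=|I\setminus J|$. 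By Brualdi's symmetric exchange theorem there is a bijection $\pi_M:J\setminus I\to I\setminus J$ with $I-\pi_M(x)+x\in\mathcal{I}_M$ for every $x$, which forces $\pi_M(x)\in C_M(x,I)$ and hence makes $x\,\pi_M(x)$ a first-type arc of $D(I)$. Symmetrically one obtains a bijection $\pi_N:J\setminus I\to I\setminus J$ for which $\pi_N(x)\,x$ is a second-type arc. Any cyclic orbit of the permutation $\sigma:=\pi_N^{-1}\circ\pi_M$ of the finite nonempty set $J\setminus I$ then spells out a directed cycle in $D(I)$ whose vertex set lies in $I\triangle J$ and alternates between $J\setminus I$ and $I\setminus J$.

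Finally, I would take $O$ to be a directed cycle of \emph{minimum length} among those contained in $I\triangle J$. Such an $O$ is automatically chordless: any chord would be an arc of $D(I)$ between two vertices of $O\subseteq I\triangle J$, and routing through it would produce a strictly shorter directed cycle still contained in $I\triangle J$, contradicting minimality. Because every arc of $D(I)$ goes between $E\setminus I$ and $I$, the minimal cycle has even length and alternates sides, so it exactly fits the definition of a switching cycle. The step I expect to require the most care is the appeal to symmetric exchange — verifying that the bijections $\pi_M,\pi_N$ really land in $I\setminus J$ rather than only in $I$ — and this is precisely where the spanning equalities forced by $I\sim J$ (so that $I$ and $J$ are bases of the \emph{same} subspace in each matroid) do the essential work; the rest is a standard shortest-cycle/minimality argument.
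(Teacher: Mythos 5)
Your proof is correct, and the overall induction scheme (induct on $|I\triangle J|$, produce a switching cycle $O\subseteq I\triangle J$, pass to $I\triangle O$) is exactly the paper's. Where you diverge is in how the cycle is manufactured. The paper forms the finite matroids $M',N'$ by contracting $I\cap J$ and restricting to $I\cup J$, observes that $I\setminus J$ and $J\setminus I$ are then common bases partitioning the ground set, and concludes that the finite digraph $D_{(M',N')}(I\setminus J)$ has no sinks or sources and so contains a directed cycle --- which pulls back to a cycle of $D_{(M,N)}(I)$ inside $I\triangle J$. You instead invoke Brualdi's symmetric exchange theorem twice (for $M$ and for $N$, legitimately, since $I\sim J$ forces $\mathrm{span}_M(I)=\mathrm{span}_M(J)$ and $\mathrm{span}_N(I)=\mathrm{span}_N(J)$, making $I$ and $J$ common bases of the relevant restrictions), compose the two bijections into a permutation of $J\setminus I$, and read a directed cycle off any orbit, then pass to a shortest cycle in $I\triangle J$ to get chordlessness. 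Both arguments are sound. The paper's is self-contained and arguably lighter: it needs nothing beyond the pigeonhole observation that a finite digraph in which every vertex has out-degree $\geq 1$ contains a directed cycle (the minor automatically kills loops, so no corner cases). Yours is more explicit/constructive but imports a nontrivial external result (Brualdi) and then still needs the shortest-cycle minimality step because the orbit cycle need not be chordless a priori. Either way your concluding minimality argument for chordlessness and the parity/alternation remark are correct, and the induction closes as you describe.
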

\begin{proof}
The ``if'' direction follows from Lemma \ref{l: switch cycle}. We prove the ``only if'' part by induction on $ \left|I \triangle J \right| $.
If $ \left|I \triangle J \right| =0$, then $ I=J $ and there is nothing to prove.
Assume $ I\neq J $. It is enough to find a switching cycle $ O\subseteq I \triangle J $ for $ I $ because after that we are done by applying the 
induction hypothesis with $ I \triangle O $ and $ J $. Let $ M' $ 
and $ N' $ be the finite matroids that we obtain from $ M $ and $ N $ respectively by contracting 
$ I\cap J $ and 
deleting $ E\setminus (I\cup J) $.  The sets $ I\setminus J $ and $ J\setminus I $ are common bases of $ M' $ and $ N' $ and partition their 
common ground set. It follows that in the finite digraph $ D_{(M', N')}(I\setminus J) $ there are neither sources nor sinks hence it cannot 
be acyclic. Finally, any 
chordless directed cycle $ O $ in $ D_{(M', N')}(I\setminus J) $ is such a cycle in $ D_{(M,N)}(I) $ with $ O\subseteq I \triangle J $ which 
concludes the proof.
\end{proof}

\begin{lemma}\label{lem: reachability in D(I)}
If $ I\sim J $, and  $ y$ is reachable by a directed path from $ x $ in $ D(I) $, then $ y $ is reachable from $ x $ in $ D(J) $ 
as well.
\end{lemma}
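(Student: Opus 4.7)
The plan is to reduce to a single switching-cycle swap and then translate a directed path in $D(I)$ arc by arc into a walk in $D(J)$. By Lemma~\ref{l: char equi}, $J$ is obtained from $I$ by a finite sequence of successive switching-cycle operations; since digraph reachability composes, induction on the length of this sequence reduces the problem to the case $J = I \triangle O$ for a single chordless directed cycle $O = \{x_0, \ldots, x_{2n+1}\}$ of $D(I)$, with the usual alternation $x_{2k} \notin I$, $x_{2k+1} \in I$ and arcs $x_i \to x_{i+1}$ taken modulo $2n+2$.

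I first establish that $O$ appears in $D(J)$ with reversed orientation. Chordlessness of $O$ in $D(I)$ forces $C_M(x_{2k}, I) \cap O = \{x_{2k}, x_{2k+1}\}$, so $C_M(x_{2k}, I) \subseteq (I \setminus O) \cup \{x_{2k}, x_{2k+1}\} \subseteq J + x_{2k+1}$. Since $J$ is $M$-independent by Lemma~\ref{l: switch cycle}, the $M$-circuit inside $J + x_{2k+1}$ is unique and therefore equals $C_M(x_{2k}, I)$; in particular $x_{2k} \in C_M(x_{2k+1}, J)$, giving the first-type arc $x_{2k+1} \to x_{2k}$ of $D(J)$. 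The symmetric computation for $N$ yields $C_N(x_{2k+1}, J) = C_N(x_{2k+2}, I) \ni x_{2k+2}$, producing the second-type arc $x_{2k+2} \to x_{2k+1}$ of $D(J)$. Together these realise the whole of $O$ in $D(J)$ with the orientation reversed.

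Given a directed $x$--$y$ path in $D(I)$, it is enough to replace every arc $u \to v$ along it by a walk from $u$ to $v$ in $D(J)$ and concatenate. By symmetry (swapping the roles of $M$ and $N$) only first-type arcs need treatment, so assume $v \in C_M(u, I)$. If $u \notin O$ and $C_M(u, I) \cap O = \emptyset$, then $C_M(u, I) \subseteq J + u$ coincides with $C_M(u, J)$, so $u \to v$ is already an arc of $D(J)$. Otherwise, I iteratively apply strong $M$-circuit elimination, combining the current circuit with $C_M(x_{2j+1}, J) = C_M(x_{2j}, I)$ across the pivot $x_{2j+1}$ while preserving $u$; each application strictly reduces the number of odd-indexed elements of $O$ in the circuit, and after finitely many steps produces $C_M(u, J)$. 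The elements appearing in these intermediate circuits lie either in $I \setminus O \subseteq J$ (supplying first-type arcs out of $u$ in $D(J)$) or in $O \setminus I \subseteq J$ (from which the reversed cycle on $O$ in $D(J)$ reaches the original target $v$ via the second-type arcs established above, whenever $v = x_{2l+1}$). Sub-cases where $u \in O$ are handled analogously by initiating the elimination on the $N$-side.

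The principal obstacle lies in the bookkeeping of this last step: one must verify in each configuration of $u$ and $v$ relative to $O$ that the combination of strong circuit elimination in $M$ (or $N$) with traversal of the reversed cycle on $O$ in $D(J)$ really does reach the specific vertex $v$, rather than merely some other vertex of $C_M(u, J)$ or $C_N(v, J)$. A careful choice of elimination order and of which reversed-cycle arcs to employ in each sub-case appears to be required.
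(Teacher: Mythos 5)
Your first step, reducing to a single switching cycle via Lemma~\ref{l: char equi}, matches the paper, and your structural observation that $O$ appears reversed in $D(J)$ is correct: the computation $C_M(x_{2k+1},J)=C_M(x_{2k},I)$ (uniqueness of the fundamental circuit inside the independent set $J$ plus chordlessness) and its $N$-side analogue $C_N(x_{2k+1},J)=C_N(x_{2k+2},I)$ are both right. However, the remainder of the argument has a real gap, which you in fact flag yourself in the final paragraph. The circuit eliminations you describe do terminate at $C_M(u,J)$, but $C_M(u,J)$ need not contain the original target $v$, and your text only treats the alternative ``whenever $v=x_{2l+1}$'', leaving the case $v\notin O$, $v\notin C_M(u,J)$ open. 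This case genuinely occurs: take $C_M(u,I)=\{u,x_1,v\}$ with $v\in I\setminus O$, and let $u$ and $x_0$ be $M$-parallel, so that $C_M(x_0,I)=\{x_0,x_1,v\}$ and $C_M(u,J)=\{u,x_0\}\not\ni v$. Then the only first-type arc out of $u$ in $D(J)$ goes to $x_0$, and reaching $v$ requires traversing the reversed cycle to $x_1$ and then using $v\in C_M(x_1,J)=C_M(x_0,I)$ --- a further step for which no general justification is given (one would need a lemma guaranteeing that some $x_{2l}$ reachable from $u$ satisfies $v\in C_M(x_{2l},I)$). So the approach is plausible but unfinished.

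The paper's proof takes a genuinely different, and tidier, route. Instead of translating a path arc by arc, it proves a \emph{cut-preservation} statement: if $\delta^{+}_{D(I)}(F)=\emptyset$, then $\delta^{+}_{D(I\triangle O)}(F)=\emptyset$. Since $O$ induces a directed cycle in $D(I)$ and $F$ has no outgoing arcs, either $O\subseteq F$ or $O\subseteq E\setminus F$; in each case the identities $\mathsf{span}_M(I\triangle O)\cap F=\mathsf{span}_M((I\triangle O)\cap F)\cap F$ and $\mathsf{span}_N(I\triangle O)\setminus F=\mathsf{span}_N((I\triangle O)\setminus F)\setminus F$ follow from $I\sim I\triangle O$ together with the hypothesis on $F$, and these identities say precisely that no first- or second-type arc leaves $F$ in $D(I\triangle O)$. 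Applied to $F=E(\{x\},J)$ and using the symmetry of $\sim$, this yields reachability directly. The advantage of this global formulation is exactly that it never needs to know \emph{which} element of a fundamental circuit is hit, which is the bookkeeping your path-translation approach cannot avoid.
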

\begin{proof}
 By Lemma \ref{l: char equi}, we can assume without loss of generality that $ J=I \triangle O $ where $ O $ is a switching cycle for $ 
I $.  It is enough to show that directed cuts are preserved, i.e. if  $ \delta_{D(I)}^{+}(F)=\emptyset $ for an $ 
F\subseteq E $, then  $ 
\delta_{D(I\triangle O)}^{+}(F)=\emptyset $ as well. Recall that each arc of $ D(I) $ either enters $ I $ or leaves $ I $ and we call these 
 first 
and second type of arcs respectively. We must have either 
$ 
O\subseteq F $ or $ O \subseteq E\setminus F $ because $ O $ induces a directed cycle in $ D(I) $. Suppose first that  $ O\subseteq F $.  Since $I 
\sim I\triangle O $, we have
\begin{align}
\mathsf{span}_M(I\triangle O) \cap F&=\mathsf{span}_M(I)\cap F \label{eq: switch}\\
\mathsf{span}_N(I\triangle O) \setminus F&=\mathsf{span}_N(I)\setminus F. \label{eq: switch 1}
\end{align}
\noindent By the 
definition of $ D(I) $, the assumption  ``no arc of the first type leaves $ F $'' means that 
\begin{equation}\label{eq: no 1st arc leaves}
\mathsf{span}_M(I) \cap F=\mathsf{span}_M(I\cap F) \cap F.
\end{equation}
Similarly, the assumption that ``no arc of the second type leaves $ F $'' means that 
\begin{equation}\label{eq: no 2st arc leaves}
\mathsf{span}_N(I) \setminus F=\mathsf{span}_N(I\setminus F) \setminus F.
\end{equation}
It follows from (\ref{eq: no 1st arc leaves})  and $ O\subseteq F $ that $ O $ is  a 
switching cycle w.r.t. $ F\cap I $ and $ {(M \!\!\upharpoonright\!\! F, N/(I\setminus F) \!\!\upharpoonright\!\! F)} $ as well. 
Thus by Lemma \ref{l: switch cycle}:
\begin{equation}\label{eq: same span}
\mathsf{span}_M(I \cap F)  =\mathsf{span}_M((I\cap F) \triangle O)=\mathsf{span}_M((I\triangle O)\cap F). 
\end{equation}
\noindent Therefore  
 \[ \mathsf{span}_M(I\triangle O) \cap F\overset{(\ref{eq: switch})}{=}\mathsf{span}_M(I)\cap F\overset{(\ref{eq: no 1st arc leaves})}{=} 
 \mathsf{span}_M(I\cap F)\cap 
 F\overset{(\ref{eq: same span})}{=}\mathsf{span}_M((I\triangle O)\cap F) \cap F. \] This means that 
 no arc of the first type leaves $ F $ in $ D(I\triangle O) $. Similarly
 \[ \mathsf{span}_N(I \triangle O) \setminus F \overset{(\ref{eq: switch 1})}{=}  \mathsf{span}_N(I)\setminus F 
 \overset{(\ref{eq: no 2st arc leaves})}{=}\mathsf{span}_N(I\setminus F)\setminus F =\mathsf{span}_N((I \triangle O)\setminus F) \setminus F, \]
where the last equality follows from the fact that $ I\setminus F=(I \triangle O)\setminus F $ because $ O\subseteq F $. This means that 
 no arc of the second type leaves $ F $ in $ D(I\triangle O) $. Therefore $ 
 \delta_{D(I\triangle O)}^{+}(F)=\emptyset $.

 The case $ O \subseteq 
 E\setminus F $ can be reduced to the case $O\subseteq F $ by considering 
 $E\setminus F $ and $ (N,M) $ instead of $ F $ and $ (M,N) $ since $ D_{(N,M)}(I) $ is obtained from $ D_{(M,N)}(I) $ by reversing all 
 the arcs. 
\end{proof}

\begin{corollary}\label{cor: aug [I]}
Assume that $ I\sim J $. Then there is an $  xy $-augmenting path  for $ I $ iff there is an $ xy$-augmenting path for $ J$. 
Furthermore, if $ P $ is an $ xy $-augmenting path for $ I $ and $ Q $ is an $ xy $-augmenting path for $ J $, then 
$ I \triangle P \sim J \triangle Q $.
\end{corollary}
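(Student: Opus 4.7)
The plan is to prove both assertions by first pinning down what $I\sim J$ says about spans, then combining the reachability Lemma~\ref{lem: reachability in D(I)} with a shortest-path argument for the first claim, and the span-identity from Lemma~\ref{lem: aug path x_0 x_2n} for the second.

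First I would record the key consequence of $I\sim J$: the conditions $I\subseteq\mathsf{span}_M(J)$ and $J\subseteq\mathsf{span}_M(I)$ force $\mathsf{span}_M(I)=\mathsf{span}_M(J)$, and similarly $\mathsf{span}_N(I)=\mathsf{span}_N(J)$. In particular, the endpoint conditions $x\in E\setminus\mathsf{span}_N(\cdot)$ and $y\in E\setminus\mathsf{span}_M(\cdot)$ are the same whether read off of $I$ or of $J$, and in both cases $x,y\notin J$. For the ``if and only if'' it then suffices (by symmetry of $\sim$) to produce an $xy$-augmenting path for $J$ from one for $I$. Given an $xy$-augmenting path $P=\{x_0,\dots,x_{2n}\}$ for $I$, the vertex sequence is in particular a directed $x\to y$ path in $D(I)$, so Lemma~\ref{lem: reachability in D(I)} furnishes a directed $x\to y$ path in $D(J)$. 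I would then pick such a path $y_0,\dots,y_k$ of \emph{minimum length}. Minimality automatically rules out any ``jumping arc'' $y_iy_j$ with $i+1<j\le k$, because such an arc would yield a strictly shorter path $y_0,\dots,y_i,y_j,\dots,y_k$; this gives condition~(4) of the augmenting-path definition for free. Since every arc of $D(J)$ crosses between $J$ and $E\setminus J$ and $x,y\notin J$, the path alternates and its length $k$ is even. Combined with the endpoint conditions already verified, the path is an $xy$-augmenting path for $J$.

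For the second assertion, I would apply Lemma~\ref{lem: aug path x_0 x_2n} twice to get
\[ \mathsf{span}_M(I\triangle P)=\mathsf{span}_M(I+y),\quad \mathsf{span}_N(I\triangle P)=\mathsf{span}_N(I+x), \]
and analogously $\mathsf{span}_M(J\triangle Q)=\mathsf{span}_M(J+y)$, $\mathsf{span}_N(J\triangle Q)=\mathsf{span}_N(J+x)$. Because adding a single element to matroid-equal closures yields equal closures, the opening observation $\mathsf{span}_M(I)=\mathsf{span}_M(J)$ and $\mathsf{span}_N(I)=\mathsf{span}_N(J)$ upgrades to $\mathsf{span}_M(I\triangle P)=\mathsf{span}_M(J\triangle Q)$ and $\mathsf{span}_N(I\triangle P)=\mathsf{span}_N(J\triangle Q)$. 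Both symmetric differences lie in $[\mathcal I_M\cap\mathcal I_N]^{<\aleph_0}$ (again by Lemma~\ref{lem: aug path x_0 x_2n}), hence each is contained in the $M$- and $N$-closure of the other, which is exactly $I\triangle P\sim J\triangle Q$.

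The only genuinely non-trivial step is ensuring that the path we extract in $D(J)$ meets condition~(4); I expect the shortest-path trick above to handle it cleanly, and everything else is bookkeeping with closures and direct invocations of Lemmas~\ref{lem: reachability in D(I)} and~\ref{lem: aug path x_0 x_2n}.
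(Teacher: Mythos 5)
Your proof is correct and takes the natural route the paper leaves implicit (the corollary is stated without a written proof): reachability in $D(J)$ comes from Lemma~\ref{lem: reachability in D(I)}, the shortest-path trick produces the required chord-free path, the endpoint conditions transfer because $I\sim J$ forces $\mathsf{span}_M(I)=\mathsf{span}_M(J)$ and $\mathsf{span}_N(I)=\mathsf{span}_N(J)$, and the ``furthermore'' clause is a clean application of Lemma~\ref{lem: aug path x_0 x_2n} together with the closure identity $\mathsf{span}(X\cup\{y\})=\mathsf{span}(\mathsf{span}(X)\cup\{y\})$. All the auxiliary claims you rely on (bipartiteness of $D(J)$ between $J$ and $E\setminus J$, hence even path length; minimality killing jumping arcs) are accurate and exactly the bookkeeping the paper expects the reader to supply.
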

 \begin{proof}
The first part follows directly from Lemma \ref{lem: reachability in D(I)}. The second part follows from the fact
that by using Lemma \ref{lem: aug path x_0 x_2n} and $ I \sim J $ we have:
 \begin{align*}
\mathsf{span}_M(I \triangle P)&= 
\mathsf{span}_M(I+y)=\mathsf{span}_M(\mathsf{span}_M(I)+y)\\&=\mathsf{span}_M(\mathsf{span}_M(J)+y)=\mathsf{span}_M(J+y)=
\mathsf{span}_M(J \triangle Q)\text{ and}\\
\mathsf{span}_N(I \triangle P)&= 
\mathsf{span}_N(I+x)=\mathsf{span}_N(\mathsf{span}_N(I)+x)\\&=\mathsf{span}_N(\mathsf{span}_N(J)+x)=\mathsf{span}_N(J+x)=
\mathsf{span}_N(J \triangle Q).
 \end{align*}
\end{proof}
 We say that $ [J] $ is the $xy $\emph{-augmentation} of   $ [I] $ if there is 
 an $ xy $-augmenting path $ P $ for $ I $ with $ J\sim I \triangle P $. Similarly,  $ [J] $ is an  $ x 
 $\emph{-augmentation} of $ [I] $ if there is 
  an $ x $-augmenting path $ P $ for $ I $ with $ J\sim I \triangle P $.   If there is no such 
 $ [J] $, then the $ xy $-augmentation ($ x $-augmentation) of $ [I] 
 $ does not exist. Corollary \ref{cor: aug [I]} ensures that these are well-defined. Let $ \boldsymbol{\mathcal{A}([I]) }$ be the set of those $ [J] $ 
 that can be obtained by a finite iteration of augmentations from 
 $ [I] $.

\begin{lemma}\label{lem: reach some J'}
If $ [J] \trianglerighteq[I] $, then  $  [J]\cap \mathcal{A}(I) \neq \emptyset $.
\end{lemma}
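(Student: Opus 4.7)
My plan is to induct on the nonnegative integer $k:=\left|J\right|-\left|I\right|$ (both sets being finite). The base case $k=0$ is immediate: from $I\trianglelefteq J$ we have $I\subseteq \mathsf{span}_M(J)\cap \mathsf{span}_N(J)$, and since $I$ is $M$- and $N$-independent of size $\left|J\right|=r_M(J)=r_N(J)$, it is a base of $M\!\!\upharpoonright\!\!\mathsf{span}_M(J)$ and of $N\!\!\upharpoonright\!\!\mathsf{span}_N(J)$. Thus $\mathsf{span}_M(I)=\mathsf{span}_M(J)$ and $\mathsf{span}_N(I)=\mathsf{span}_N(J)$, giving $I\sim J$; as $I\in \mathcal{A}(I)$ via the empty iteration, we obtain $I\in [J]\cap \mathcal{A}(I)$.

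For the inductive step $k>0$, the idea is to locate a single augmenting path that moves $I$ one step closer to $J$. I work in the finite restrictions $M':=M\!\!\upharpoonright\!\!(I\cup J)$ and $N':=N\!\!\upharpoonright\!\!(I\cup J)$. Because circuits of $M$ through an element of $I\cup J$ using only elements of $I$ already lie inside $I\cup J$, the hypothesis $I\subseteq \mathsf{span}_M(J)\cap\mathsf{span}_N(J)$ upgrades to $\mathsf{span}_{M'}(J)=\mathsf{span}_{N'}(J)=I\cup J$, so $J$ is a common base of $(M',N')$ while $I$ is a strictly smaller common independent set. Edmonds' finite matroid intersection theorem then furnishes an augmenting path for $I$ in $D_{(M',N')}(I)$; I pick a shortest one $P$, which is automatically chord-free. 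Setting $I':=I\triangle P$, Lemma~\ref{lem: aug path x_0 x_2n} gives $I'\in \mathcal{I}_M\cap \mathcal{I}_N$ once $P$ is shown to be an augmenting path in $D_{(M,N)}(I)$ as well, and $I'\subseteq I\cup J\subseteq \mathsf{span}_M(J)\cap \mathsf{span}_N(J)$ yields $[I']\trianglelefteq [J]$. The induction hypothesis then supplies $J'\in \mathcal{A}(I')\cap [J]$; since $I'\in \mathcal{A}(I)$, clearly $\mathcal{A}(I')\subseteq \mathcal{A}(I)$, so $J'\in \mathcal{A}(I)\cap [J]$.

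The main obstacle is the transfer step: ensuring that an augmenting path produced inside the finite restriction is actually augmenting for the ambient pair $(M,N)$. The delicate axiom is the ``no jumping arcs'' condition, which must hold against the possibly much larger digraph $D_{(M,N)}(I)$ and not only against $D_{(M',N')}(I)$. The key observation making this work is that for $x\in I\cup J$ the fundamental circuits $C_M(x,I)$ and $C_N(x,I)$ remain inside $I\cup J$, so the arcs of $D_{(M,N)}(I)$ with both ends in $I\cup J$ coincide exactly with the arcs of $D_{(M',N')}(I)$, and similarly the non-span sets $E\setminus \mathsf{span}_M(I)$, $E\setminus \mathsf{span}_N(I)$ agree with their analogs in $(M',N')$ when restricted to $I\cup J$. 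Hence chord-freeness of $P$ inside the restriction forces chord-freeness in the ambient digraph, and $P$ is a bona fide augmenting path for $I$ in $D_{(M,N)}(I)$, which closes the induction.
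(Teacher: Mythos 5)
Your proof is correct and takes essentially the same approach as the paper: both pass to the finite restriction to $I\cup J$ (where $J$ becomes a common base), invoke Edmonds' finite matroid intersection theory to augment $I$, and transfer the resulting augmenting path(s) back to $(M,N)$. The only presentational difference is that the paper applies augmenting paths in the finite pair until a maximum common independent set is reached and then identifies it with $[J]$ in one step, whereas you cast the same iteration as an explicit induction on $|J|-|I|$ and spell out in detail the transfer-of-augmenting-path argument (circuits, spans and the ``no jumping arcs'' condition agree on $I\cup J$) that the paper asserts without elaboration.
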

\begin{proof}
We apply augmenting paths w.r.t. the finite matroids $ (M\!\!\upharpoonright\!\! (I \cup J),N\!\!\upharpoonright\!\!(I \cup J)) $ 
iteratively starting with $ I $ and doing 
as 
many iterations as possible.
Note that these are augmenting paths for $ (M,N) $ as well. For the resulting $ J' $, we have $ J'\trianglelefteq J $ because $ J $ is a 
common base of  $ M \!\!\upharpoonright\!\! (I \cup J) $ and $ N\!\!\upharpoonright\!\! (I \cup J) $.  On the other hand,  $ J' $ is a maximal-sized 
common 
independent set  of the finite matroids $ M \!\!\upharpoonright\!\! (I \cup J) $ and $ 
N\!\!\upharpoonright\!\! (I \cup J) $ by  (see Lemma \ref{lem: Edmonds}). 
Therefore  $ J' $ is also a 
common base, 
thus $ J' \sim J $ and hence $ J' \in [J]\cap \mathcal{A}(I)$.
\end{proof}
\begin{corollary}\label{cor: reach by aug}
$ [J] \trianglerighteq [I] $ iff $ [J]\in \mathcal{A}([I]) $.
\end{corollary}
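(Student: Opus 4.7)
The statement is a clean two-direction equivalence, and both directions should fall out of results already established.

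For the ``if'' direction, the plan is to show that a single augmentation step $[I]\mapsto[I\triangle P]$ only goes up in the $\trianglerighteq$ preorder, and then iterate. Concretely, if $P=\{x_0,\dots,x_{2n}\}$ is an augmenting path for $I$, then by Lemma \ref{lem: aug path x_0 x_2n} we have $\mathsf{span}_M(I\triangle P)=\mathsf{span}_M(I+x_{2n})\supseteq \mathsf{span}_M(I)$ and similarly $\mathsf{span}_N(I\triangle P)\supseteq \mathsf{span}_N(I)$. Hence $I\subseteq \mathsf{span}_M(I)\cap\mathsf{span}_N(I)\subseteq\mathsf{span}_M(I\triangle P)\cap\mathsf{span}_N(I\triangle P)$, which is exactly $I\trianglelefteq I\triangle P$, i.e. $[I\triangle P]\trianglerighteq [I]$. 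By Corollary \ref{cor: aug [I]} the augmentation of an equivalence class is well-defined and preserves this relation, so a finite iteration of augmentations from $[I]$ can only produce classes $[J]\trianglerighteq[I]$. Thus $\mathcal{A}([I])\subseteq\{[J]:[J]\trianglerighteq[I]\}$.

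For the ``only if'' direction, the work has essentially been done in Lemma \ref{lem: reach some J'}. Given $[J]\trianglerighteq[I]$, that lemma hands us some $J'\in[J]\cap\mathcal{A}(I)$, i.e.\ a representative $J'$ of $[J]$ that is reached from $I$ by a finite iteration of augmenting paths $P_0,\dots,P_{n-1}$ (with respect to the intermediate common independent sets). It remains only to translate this chain of set-level augmentations into a chain of class-level augmentations from $[I]$ to $[J']=[J]$. This is immediate from the definition of the augmentation operation on classes together with Corollary \ref{cor: aug [I]}: each $P_i$ is an augmenting path for $I\triangle P_0\triangle\cdots\triangle P_{i-1}$, so $[I\triangle P_0\triangle\cdots\triangle P_i]$ is an augmentation of $[I\triangle P_0\triangle\cdots\triangle P_{i-1}]$, and composing these $n$ steps yields $[J']\in\mathcal{A}([I])$.

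There is no real obstacle here; the statement is essentially a packaging of Lemma \ref{lem: reach some J'} and Corollary \ref{cor: aug [I]} with the trivial monotonicity observation from Lemma \ref{lem: aug path x_0 x_2n}. The only minor thing to be careful about is that ``augmentation of a class'' is defined via \emph{some} representative and \emph{some} augmenting path, so one should cite Corollary \ref{cor: aug [I]} to ensure that choosing the specific representatives $I\triangle P_0\triangle\cdots\triangle P_{i-1}$ and paths $P_i$ coming from Lemma \ref{lem: reach some J'} is legitimate and produces a genuine element of $\mathcal{A}([I])$.
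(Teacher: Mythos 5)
Your proof is correct and is essentially the intended reading of the corollary, which the paper states without proof as a direct consequence of Lemma \ref{lem: reach some J'} together with the easy monotonicity of a single augmentation step (from Lemma \ref{lem: aug path x_0 x_2n}) and the well-definedness of class-level augmentation (Corollary \ref{cor: aug [I]}). There is nothing to compare against, since the paper leaves the proof implicit; your packaging of the two directions is what was meant.
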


\subsection{A maximal directed set}\label{subsec: maximal directed set}
Let $\boldsymbol{\mathcal{D}}$ be a maximal $ \trianglelefteq 
$-directed subset of $\mathsf{Fin}(I_M\cap I_N) /\!\!\sim $ (exists by Zorn's lemma). 

\begin{observation}\label{obs: nonempty dowclosed D}
 $ \mathcal{D} $ is a non-empty, downward closed subset of $\mathsf{Fin}(I_M\cap I_N) /\!\!\sim $.
\end{observation}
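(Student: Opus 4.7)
The plan is to treat both assertions as routine consequences of the maximality of $\mathcal{D}$ as a $\trianglelefteq$-directed subset, using only transitivity of the preorder $\trianglelefteq$ on equivalence classes.

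For non-emptiness, I would observe that any singleton is vacuously $\trianglelefteq$-directed. In particular, the empty common independent set lies in $[\mathcal{I}_M\cap \mathcal{I}_N]^{<\aleph_0}$, so $\{[\emptyset]\}$ is a directed subset. If $\mathcal{D}$ were empty, then $\{[\emptyset]\}$ would be a strictly larger directed set, contradicting maximality. Hence $\mathcal{D}\neq \emptyset$.

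For downward closure, suppose $[J]\in \mathcal{D}$ and $[I]\trianglelefteq [J]$; I want to show $[I]\in \mathcal{D}$. By the maximality of $\mathcal{D}$ it suffices to check that $\mathcal{D}\cup\{[I]\}$ is still $\trianglelefteq$-directed. Given any $[K]\in \mathcal{D}$, the directedness of $\mathcal{D}$ furnishes an $[L]\in \mathcal{D}$ with $[K]\trianglelefteq [L]$ and $[J]\trianglelefteq [L]$. By transitivity of $\trianglelefteq$ we also have $[I]\trianglelefteq [J]\trianglelefteq [L]$, so $[L]$ is a common upper bound of $[I]$ and $[K]$ in $\mathcal{D}\cup\{[I]\}$. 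For two elements both in $\mathcal{D}$ directedness is inherited, and $[I]$ together with itself obviously has the upper bound $[I]$. Therefore $\mathcal{D}\cup\{[I]\}$ is directed, forcing $[I]\in \mathcal{D}$ by maximality.

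There is no real obstacle here; the argument is purely order-theoretic and does not invoke any matroidal content or the structural lemmas of the previous subsection. The only mild subtlety is to make sure that the partial order $\trianglelefteq$ on equivalence classes (rather than the preorder on the finite common independent sets themselves) is being used, so that ``directed'' is interpreted in the quotient and transitivity is unambiguous.
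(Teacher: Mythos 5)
Your proof is correct and takes essentially the same route as the paper, which observes in one stroke that $\mathcal{D}\cup\{[\emptyset]\}\cup\{[I]:(\exists [J]\in\mathcal{D})\,[I]\trianglelefteq[J]\}$ is still $\trianglelefteq$-directed and therefore cannot properly contain $\mathcal{D}$. Your two-step version merely spells out the directedness verification that the paper leaves implicit.
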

\begin{proof}
$ \mathcal{D}\cup \{  [\emptyset]  \} \cup \{ [I]:(\exists [J]\in \mathcal{D})([I]\trianglelefteq [J])\} $ is a directed set and cannot be a proper 
superset of $ 
\mathcal{D} $ by the maximality. 
\end{proof}
We let
\begin{align*}
\boldsymbol{\mathsf{span}_M(\mathcal{D})}&:=\bigcup_{[I]\in \mathcal{D}} \mathsf{span}_{M}(I) \\
\boldsymbol{\mathsf{span}_N(\mathcal{D})}&:=\bigcup_{[I]\in \mathcal{D}} \mathsf{span}_{N}(I).
\end{align*}
For $ [I]\in \mathcal{D} $ we define $ 
\boldsymbol{{[I]}{\uparrow_{\mathcal{D}}}}:=\{ [J]\in 
\mathcal{D}:  
[J]\trianglerighteq [I]\} $.

\begin{lemma}\label{l: [I_e]}
For every $ x\in  E\setminus \mathsf{span}_N(\mathcal{D})  $, there is an $ I_x\in \bigcup\mathcal{D} $ 
such that there is no $ [J]\in{[I_x]}{\uparrow_{\mathcal{D}}} $  for which there exists an $ x
$-augmentation  of $ [J] $.
\end{lemma}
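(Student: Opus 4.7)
The plan is to argue by contradiction against the maximality of $\mathcal{D}$. Suppose there is no such $I_x$, so that the subfamily
\[
\mathcal{X}:=\{[J]\in\mathcal{D}:[J]\text{ admits an }x\text{-augmentation}\}
\]
is cofinal in $\mathcal{D}$. By Lemma~\ref{lem: aug path x_0 x_2n}, for every $[J]\in\mathcal{X}$ with an $x$-augmenting path $P$ one has $\mathsf{span}_{N}(J\triangle P)=\mathsf{span}_{N}(J+x)\ni x$. Because $x\notin\mathsf{span}_{N}(\mathcal{D})$, the class $[J\triangle P]$ cannot lie in $\mathcal{D}$.

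Next, I would consider the enlarged family
\[
\mathcal{D}':=\mathcal{D}\cup\{[K]:[K]\trianglelefteq[J\triangle P]\text{ for some }[J]\in\mathcal{X}\text{ and some }x\text{-augmenting path }P\text{ for }J\},
\]
which is downward closed and strictly contains $\mathcal{D}$ by the previous paragraph. The goal is to verify that $\mathcal{D}'$ is still $\trianglelefteq$-directed; this contradicts the maximality of $\mathcal{D}$ and finishes the proof.

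To check directedness, take $[A],[B]\in\mathcal{D}'$; the non-trivial case is $[A]\in\mathcal{D}$ and $[B]\trianglelefteq[J_{B}\triangle P_{B}]$ for some $[J_{B}]\in\mathcal{X}$ (the other mixed case is analogous). The strategy is to pick $[L]\in\mathcal{X}$ with $[L]\trianglerighteq[A],[J_{B}]$ (combining directedness of $\mathcal{D}$ with cofinality of $\mathcal{X}$), and then choose an $x$-augmenting path $P_{L}$ for $L$ so that $[L\triangle P_{L}]$ dominates both $[A]$ and $[B]$. The $\mathsf{span}_{N}$-side is routine, since $\mathsf{span}_{N}(L\triangle P_{L})=\mathsf{span}_{N}(L+x)\supseteq\mathsf{span}_{N}(J_{B}+x)\supseteq B$ and also $\mathsf{span}_{N}(L+x)\supseteq A$.

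The main obstacle is the $\mathsf{span}_{M}$-side, where one needs $B\subseteq\mathsf{span}_{M}(L+y_{L})$ while a priori only $B\subseteq\mathsf{span}_{M}(J_{B}+y_{J_{B}})$ is available. My approach is to arrange $y_{L}=y_{J_{B}}$ when $y_{J_{B}}\notin\mathsf{span}_{M}(L)$ (the case $y_{J_{B}}\in\mathsf{span}_{M}(L)$ being trivial, as then $\mathsf{span}_{M}(J_{B}+y_{J_{B}})\subseteq\mathsf{span}_{M}(L)$). This requires an $xy_{J_{B}}$-augmenting path for $L$, i.e., reachability of $y_{J_{B}}$ from $x$ in $D(L)$. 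Such a directed path exists in $D(J_{B})$; by Corollary~\ref{cor: reach by aug}, $L$ is reached from $J_{B}$ by a finite iteration of augmentations, and by applying Lemma~\ref{lem: arc pres aug} step by step---choosing intermediate augmenting paths to avoid $x$ and the current $D$-out-neighbourhood of $x$ whenever possible---one aims to preserve the directed path from $x$ to $y_{J_{B}}$ throughout. This reachability-tracking is the delicate technical step I expect to be the main obstacle. Once it is in place, $[L\triangle P_{L}]\in\mathcal{D}'$ is a common upper bound of $[A]$ and $[B]$, $\mathcal{D}'$ is directed, and the maximality of $\mathcal{D}$ is contradicted.
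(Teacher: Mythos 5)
Your overall strategy—contradict maximality of $\mathcal{D}$ by enlarging it with the down-closure of the $x$-augmentations—is the right shape, and you correctly locate the crux on the $\mathsf{span}_M$ side: the $M$-endpoint $y$ of the augmenting path varies, so directedness is not automatic. However, the gap you flag (``reachability-tracking'') is genuine and, as proposed, not fillable by ``choosing intermediate augmenting paths to avoid $x$ and its out-neighbourhood.'' First, the intermediate paths $Q_0,\dots,Q_{m-1}$ transforming $J_B$ into $L$ are not free parameters; second, even if they were, preserving a whole directed path to $y_{J_B}$ requires avoiding the entire reachability cone $E(x,\cdot)$ (all vertices of the path and all their out-neighbourhoods, cf.\ Lemma \ref{lem: arc pres aug}), not just $x$ and $\delta^+(x)$; and third, the relevant cone changes after each $Q_k$.

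The paper resolves exactly this difficulty by a two-stage argument that your proposal is missing. Stage one (Observation \ref{obs: xM aug no cofin}) is the ``restricted'' version of your directedness claim: if one only adjoins $xy$-augmentations with $y\in\mathsf{span}_M(\mathcal{D})$, the enlarged family \emph{is} directed, because one can always pass to a large enough $[L]\in\mathcal{D}$ that already $M$-spans $y$, making the choice of $y_L$ irrelevant. This shows $\mathcal{D}_{x,M}$ is not cofinal, so some $[I_x]$ has $[I_x]{\uparrow_{\mathcal{D}}}\cap\mathcal{D}_{x,M}=\emptyset$. Stage two then shows that this $[I_x]$ already works: if some $[J]\trianglerighteq[I_x]$ had an $xy$-augmenting path $P$, then along any augmenting-path chain $P_0,\dots,P_{n-1}$ from $J$ to a $K'\in[K]$ with $[K]\trianglerighteq[J]$, each $P_k$ is \emph{forced} to avoid $E(x, J\triangle P_0\triangle\cdots\triangle P_{k-1})$—otherwise a directed path from $x$ to the endpoint of $P_k$ would put $[J\triangle P_0\cdots\triangle P_{k-1}]$ into $\mathcal{D}_{x,M}$, contradicting the choice of $I_x$. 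Lemma \ref{lem: arc pres aug} then propagates $P$, making $\mathcal{D}_{x,y}$ cofinal and contradicting Observation \ref{obs: xy aug no cofin}. Without stage one you have no handle to force the intermediate paths away from $E(x,\cdot)$, so as written your $\mathcal{D}'$ cannot be shown to be directed and the proof does not go through. (You also only discuss the mixed case $[A]\in\mathcal{D}$, $[B]\trianglelefteq[J_B\triangle P_B]$; the case of two augmentation-side classes needs the same controlled endpoints, and only works under the $\mathsf{span}_M(\mathcal{D})$ restriction of stage one.)
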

\begin{proof}
 Let $ x\in E\setminus \mathsf{span}_N(\mathcal{D})   $ be given. For $ y\in E $, we define $ \mathcal{D}_{x,y} $ to be the set of all  $ [I] \in  
  \mathcal{D}$ admitting  an $ xy $-augmentation. 
  \begin{observation}\label{obs: xy aug no cofin}
  There is no $ y\in E 
    $ such that $ \mathcal{D}_{x,y} $ is cofinal in $ (\mathcal{D}, \trianglelefteq) $.
  \end{observation}
  \begin{proof}
  Suppose for a contradiction that $ \mathcal{D}_{x,y} $ is cofinal. Then $ \mathcal{D}_{x,y}\neq \emptyset $ because $ 
  \mathcal{D}\neq \emptyset $  by Observation \ref{obs: nonempty dowclosed D}.  On the one hand, none of the $ 
    xy $-augmentations of the elements of $ \mathcal{D}_{x,y} $ are in $ \mathcal{D} $ because of the choice of $ x $. On the other hand, adding 
    these to $ \mathcal{D} $ results in a directed set contradicting the maximality of $ \mathcal{D} $.
  \end{proof}
    
 \begin{observation}\label{obs: xM aug no cofin}
 $\mathcal{D}_{x,M}:=\cup \{\mathcal{D}_{x,y}:\ y\in \mathsf{span}_M(\mathcal{D}) \} $ is not cofinal in $ \mathcal{D} $.
 \end{observation}
 \begin{proof}
 As in the previous observation, if it were cofinal, then adding the corresponding augmentations to $ \mathcal{D} $ would yield a directed set 
 which is a proper superset of $ \mathcal{D} $.
 \end{proof}
By Observation \ref{obs: xM aug no cofin}, we can pick an $ I_x\in \bigcup\mathcal{D} $ such that $ 
{[I_x]}{\uparrow_{\mathcal{D}}} \cap 
\mathcal{D}_{x,M}=\emptyset $. Suppose for a contradiction that there is a $ [J]\in{[I_x]}{\uparrow_{\mathcal{D}}} $  
that has an $ xy 
$-augmentation for some $ 
y $. Let $ P $ be an $ xy 
$-augmenting path for $ J $. We show that $ \mathcal{D}_{x,y} $ is cofinal in $ \mathcal{D} $ which contradicts Observation 
\ref{obs: xy aug no cofin}. To do so, it is 
enough to prove that $ \mathcal{D}_{x,y}\supseteq{[J]}{\uparrow_{\mathcal{D}}} $ because $ \mathcal{D} $ is a directed 
set and therefore $ 
{[J]}{\uparrow_{\mathcal{D}}} $ 
is cofinal in $ \mathcal{D} $.  Since $ {[J]}{\uparrow_{\mathcal{D}}}=\mathcal{A}[J]\cap \mathcal{D} $ by Corollary \ref{cor: 
reach by aug}, it 
suffices to show that $ P $ is an 
augmenting path for every $ K\in \mathcal{A}(J) \cap \mathcal{D} $. This holds 
for $ K=J$ by assumption. Suppose that we already know for  some $ K\in \mathcal{A}(J) \cap \mathcal{D} $ that $ P $ is an 
augmenting path for $ K $ and assume that $ Q $ is an 
augmenting 
path for $ K $ for which $ [K \triangle Q]\in \mathcal{D}  $.  Note that by Lemma 
\ref{lem: aug path x_0 x_2n} the endpoint of $ Q $ is in $ 
\mathsf{span}_M(\mathcal{D}) $.   Then $ Q $ cannot contain any element that is reachable 
from $ x $ in $ D(K) $ since otherwise 
there would be a directed path in $ 
D(K) $ from $ x $ to the endpoint of $ Q $ contradicting $ {[I_x]}{\uparrow_{\mathcal{D}}} \cap 
\mathcal{D}_{x,M}=\emptyset $. But then $ P \cap Q\neq \emptyset $, moreover, $ Q\cap N^{+}_{D(K)}(x) $ for each $ x\in P $. Thus Lemma 
\ref{lem: arc pres aug} ensures that $ P $ remains a directed path in $ D(K 
\triangle Q) $ 
because the arcs witnessing that $ P $ form a directed path in $ D(K) $ remain arcs in $ D(K\triangle Q) $ as well. 
This concludes the proof of Lemma \ref{l: [I_e]}.
\end{proof}
Let us fix sets $ \boldsymbol{I_x} $ for $ x \in E\setminus \mathsf{span}_N(\mathcal{D})  $ according to Lemma \ref{l: [I_e]}. Note that $ 
E(x,I)= E(x,J)$ if $ I \sim J $  
by 
Lemma \ref{lem: reachability in D(I)}. 
\begin{lemma}\label{lem: E(x, I) increasing}
Assume that $[J], [I]\in {[I_x]}{\uparrow_{\mathcal{D}}} $ with $ J \in \mathcal{A}(I) $. Then $ E(x,I)\subseteq E(x, J) $ 
and $ E(x,I) \cap I=E(x,I)\cap J $. 
\end{lemma}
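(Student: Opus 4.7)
The plan is to induct on the length $n$ of a sequence $P_0, P_1, \ldots, P_{n-1}$ of augmenting paths witnessing $J \in \mathcal{A}(I)$. Writing $I_\ell := I \triangle P_0 \triangle \cdots \triangle P_{\ell - 1}$, each $[I_\ell]$ lies in $[I_x]\uparrow_{\mathcal{D}}$: the chain $[I] \trianglelefteq [I_1] \trianglelefteq \cdots \trianglelefteq [J]$ follows from Lemma \ref{lem: aug path x_0 x_2n}, and downward closure of $\mathcal{D}$ (Observation \ref{obs: nonempty dowclosed D}) together with $[I_\ell] \trianglelefteq [J] \in \mathcal{D}$ puts every $[I_\ell]$ in $\mathcal{D}$. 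Hence it suffices to prove the single-step version: if $J = I \triangle P$ for an augmenting path $P$ for $I$ with $[I] \in [I_x]\uparrow_{\mathcal{D}}$, then $E(x, I) \subseteq E(x, J)$ and $E(x, I) \cap I = E(x, I) \cap J$.

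The crux is the claim that $P \cap E(x, I) = \emptyset$. Supposing otherwise, write $P = \{x_0, \ldots, x_{2n}\}$, let $x_k$ be the element of $P \cap E(x, I)$ of \emph{largest} index $k$, and fix a \emph{shortest} $D(I)$-path $y_0 = x, y_1, \ldots, y_m = x_k$. I claim the concatenation $y_0, \ldots, y_{m-1}, x_k, x_{k+1}, \ldots, x_{2n}$ is an $x$-augmenting path for $I$, contradicting Lemma \ref{l: [I_e]} applied to $[I] \in [I_x]\uparrow_{\mathcal{D}}$. Endpoint and consecutive-arc conditions are immediate ($x \notin \mathsf{span}_N(\mathcal{D})$ and $x_{2n} \notin \mathsf{span}_M(I)$); what remains is absence of forward-jumping arcs. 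Chords within the $y$-segment are ruled out because shortest paths have none, chords internal to $x_k, \ldots, x_{2n}$ are forbidden by the augmenting property of $P$, and a cross arc $y_a \to x_b$ with $a < m$ and $b > k$ would exhibit $x_b \in E(x, I)$, contradicting the maximality of $k$. The same maximality argument forces the vertices of the concatenation to be distinct.

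With $P \cap E(x, I) = \emptyset$ in hand, every $v \in E(x, I)$ satisfies $\delta^+_{D(I)}(v) \subseteq E(x, I)$, so $v$ and its out-neighbours all avoid $P$; Lemma \ref{lem: arc pres aug} then yields $\delta^+_{D(I)}(v) \subseteq \delta^+_{D(J)}(v)$, so every $D(I)$-path from $x$ survives in $D(J)$, giving $E(x, I) \subseteq E(x, J)$. The second equality is immediate from $J = I \triangle P$: $E(x, I) \cap J = E(x, I) \cap (I \triangle P) = E(x, I) \cap I$ because $E(x, I) \cap P = \emptyset$. The induction chains these single-step conclusions: $E(x, I) \subseteq E(x, I_1) \subseteq \cdots \subseteq E(x, J)$, and since $E(x, I) \subseteq E(x, I_\ell)$ at every stage one may rewrite $E(x, I) \cap I_\ell = E(x, I) \cap I_{\ell+1}$ and telescope to obtain $E(x, I) \cap I = E(x, I) \cap J$.

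The main technical obstacle is the no-jumping-arc verification for the spliced path, which dictates \emph{both} design choices: picking $x_k$ of maximal index prevents forward cross-chords from landing in the remaining suffix of $P$, while picking the shortest $y$-path eliminates chords inside the $y$-segment. Relaxing either choice breaks the argument, so the two selections must be made in tandem.
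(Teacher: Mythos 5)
Your proposal is correct and takes essentially the same route as the paper: reduce to a single augmenting step, show $P\cap E(x,I)=\emptyset$ using the defining property of $I_x$, and then use Lemma \ref{lem: arc pres aug} to preserve the reachability paths. The paper dispenses with the claim $P\cap E(x,I)=\emptyset$ in one phrase (``by the definition of $I_x$''), implicitly appealing to the standard fact that a directed path from $x$ to $E\setminus\mathsf{span}_M(I)$ yields an $x$-augmenting path via shortest-path chordlessness, whereas you construct the augmenting path explicitly by splicing a shortest $x$-to-$x_k$ path with the tail $x_k,\dots,x_{2n}$ of $P$; both verifications are sound and the difference is one of exposition rather than substance.
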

\begin{proof}
It is enough to prove the special case where there is a single augmenting path $ P $ for $ I $ such that $ J=I \triangle P $. Indeed, the general case 
follows then by induction on the number of augmenting paths transforming $ I $ to $ J $. By the definition of $ I_x $ we know that $ P \cap 
E(x,I)=\emptyset $ and therefore $ E(x,I) \cap I=E(x,I)\cap J $.  Lemma \ref{lem: arc pres aug} ensures that all the arcs of $ D(I) $ whose both 
endpoints are in $ E(x,I) $ remain arcs in $ D(J) $. Thus the reachability of a $ y\in E(x,I) $  in $ D(J) $ from $ x $ is witnessed by the same 
directed path as in $ D(I) $. Thus $ E(x,I)\subseteq E(x, J) $.
\end{proof}

\begin{corollary}\label{cor: rachability in D([I])}
If $[J], [I]\in {[I_x]}{\uparrow_{\mathcal{D}}} $ with $ [J] \trianglerighteq [I] $, then $ E(x, J)\supseteq E(x,I)  $. 
\end{corollary}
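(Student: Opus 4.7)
The plan is to combine the three previous results in the subsection. The statement is really a "factor through representatives" argument: Lemma \ref{lem: E(x, I) increasing} gives monotonicity of $E(x,\cdot)$ along \emph{actual} iterated augmentations $J\in\mathcal{A}(I)$, while the hypothesis here is merely $[J]\trianglerighteq[I]$ in the quotient poset. So the task is to bridge this by finding a good representative of $[J]$, applying the representative-level lemma, and then transporting back to classes.

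First I would invoke Lemma \ref{lem: reach some J'} (essentially Corollary \ref{cor: reach by aug}): since $[J]\trianglerighteq [I]$, there exists $J'\in [J]$ with $J'\in \mathcal{A}(I)$, i.e., a concrete representative of the class $[J]$ reachable from $I$ by a finite sequence of augmenting paths. Because $[J']=[J]\in {[I_x]}{\uparrow_{\mathcal{D}}}$ and also $[I]\in {[I_x]}{\uparrow_{\mathcal{D}}}$, the hypotheses of Lemma \ref{lem: E(x, I) increasing} are met for the pair $(I,J')$, and that lemma delivers $E(x,I)\subseteq E(x,J')$.

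To finish, I would use the remark immediately preceding Lemma \ref{lem: E(x, I) increasing}, which records that $E(x,\cdot)$ is constant on $\sim$-classes (a direct consequence of Lemma \ref{lem: reachability in D(I)}). Applying this to $J'\sim J$ gives $E(x,J')=E(x,J)$, and the desired inclusion $E(x,I)\subseteq E(x,J)$ follows.

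There is no genuine obstacle: once $J'$ is produced, the rest is bookkeeping. The only conceptual point worth flagging is that the definition of $\mathcal{A}(I)$ and the statement of Lemma \ref{lem: E(x, I) increasing} live at the level of sets, whereas $\mathcal{D}$ and $\trianglelefteq$ live at the level of equivalence classes; Lemma \ref{lem: reach some J'} and Lemma \ref{lem: reachability in D(I)} are exactly the two bridges needed to pass between these two levels, and the corollary is nothing more than chaining them.
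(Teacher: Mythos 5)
Your proof is correct and follows exactly the same route as the paper's: use Lemma~\ref{lem: reach some J'} to pick a representative $J'\sim J$ with $J'\in\mathcal{A}(I)$, apply Lemma~\ref{lem: E(x, I) increasing} to $(I,J')$, and invoke the invariance of $E(x,\cdot)$ on $\sim$-classes (Lemma~\ref{lem: reachability in D(I)}) to transport back to $J$. No differences worth noting.
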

\begin{proof}
Recall that $ [J] \trianglerighteq [I] $ implies $ [J]  \in \mathcal{A}([I])$ by Corollary \ref{cor: reach by aug} and 
$ E(x, J)=E(x, J') $ whenever $ J' \sim J $ by Lemma \ref{lem:  reachability in D(I)}. We pick a $ J' \sim J $ with $ J'\in 
\mathcal{A}(I) $ by using 
Lemma \ref{lem: reach some J'} and then apply Lemma \ref{lem: E(x, I) increasing} with $ J' $ and $ I $.
\end{proof}
\begin{lemma}\label{lem: E(x,I) M-span}
For   $ [I] \in {[I_x]}{\uparrow_{\mathcal{D}}} $ we have $ 
E(x,I)\subseteq \mathsf{span}_M(I 
\cap E(x,I)) $.
\end{lemma}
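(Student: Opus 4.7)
The plan is to fix an arbitrary $y \in E(x,I)$ and verify $y \in \mathsf{span}_M(I \cap E(x,I))$ by a case split. The easy case $y \in I$ is immediate since then $y \in I \cap E(x,I)$. So the work is for $y \notin I$, and I would further distinguish whether $y \in \mathsf{span}_M(I)$.

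In the branch $y \in \mathsf{span}_M(I)$, the $M$-circuit $C_M(y,I)$ does the job. Each $z \in C_M(y,I) \setminus \{y\}$ lies in $I$, and by the definition of $D(I)$ the arc $yz$ is a first-type arc. Prolonging any witnessing directed $xy$-path by this single arc shows $z \in E(x,I)$, hence $C_M(y,I) \setminus \{y\} \subseteq I \cap E(x,I)$ and $y \in \mathsf{span}_M(I \cap E(x,I))$.

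The substantive part is the branch $y \notin \mathsf{span}_M(I)$, where I would derive a contradiction with the defining property of $I_x$ from Lemma \ref{l: [I_e]}. Concretely, I would take a shortest directed path $P = \{x_0,\dots,x_{k}\}$ from $x=x_0$ to $y=x_k$ in $D(I)$ (using the convention $P = \{x\}$ when $y = x$). Shortest directed paths are chordless, because any chord $x_i x_j$ with $i+1 < j$ would yield a strictly shorter directed path from $x$ to $y$. The endpoint conditions of augmenting paths also hold: $x_0 = x \in E \setminus \mathsf{span}_N(\mathcal{D}) \subseteq E \setminus \mathsf{span}_N(I)$, and $x_{k} = y \notin \mathsf{span}_M(I)$ by the hypothesis of this branch. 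Thus $P$ satisfies all four conditions of an $xy$-augmenting path for $I$, producing an $x$-augmentation of $[I]$; since $[I] \in [I_x]{\uparrow_{\mathcal{D}}}$, this contradicts Lemma \ref{l: [I_e]}.

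The main obstacle I anticipate is the degenerate case $y = x$, where $P = \{x\}$ has no arcs: conditions (3) and (4) in the definition of augmenting paths become vacuous, so only (1) and (2) need to be checked, and these are exactly the hypotheses of the current branch. Once this degenerate case is dealt with explicitly, the shortest-path argument uniformly handles all $y$ in the difficult branch and completes the proof.
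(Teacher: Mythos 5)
Your proposal is correct and follows essentially the same route as the paper: the easy inclusions for $y \in I$ and $y \in \mathsf{span}_M(I)\setminus I$ via $C_M(y,I)$ and first-type arcs, and a contradiction with the defining property of $I_x$ from Lemma \ref{l: [I_e]} when $y \notin \mathsf{span}_M(I)$. The paper merely asserts the existence of the $xy$-augmenting path there; your shortest-path argument to guarantee chordlessness (together with the explicit treatment of the degenerate $y=x$ case) is exactly the standard detail the paper omits, and it is valid since $x \notin \mathsf{span}_N(\mathcal{D}) \supseteq \mathsf{span}_N(I)$ gives the required endpoint condition.
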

\begin{proof}
Let $ y \in E(x,I)\setminus I $ be given. Then $ y \in \mathsf{span}_M(I) $ since otherwise the $ xy $-augmentation of $ [I] $ would exist which 
contradicts $ [I] \in {[I_x]}{\uparrow_{\mathcal{D}}} $. But then either $ y $ is an $ M $-loop, or $ C_M(y,I) $ is 
well-defined and a subset of $ E(x,I) $ because 
$ yz 
\in D(I) $ for each $ z \in C_M(y,I)-y $. In both cases $ y \in \mathsf{span}_M(I \cap E(x,I)) $. 
\end{proof}
We define \[ \boldsymbol{W}:=\bigcup\{E(x,I): x\in E\setminus \mathsf{span}_N(\mathcal{D})  \wedge [I] \in 
{[I_x]}{\uparrow_{\mathcal{D}}} \}. \]
 Note that $ E\setminus \mathsf{span}_N(\mathcal{D}) \subseteq W $ (see Figure \ref{fig: JX and JY}).  We let $ 
\boldsymbol{S}:= W \cap \mathsf{span}_N(\mathcal{D}) $, this will be the $ S $ promised in Lemma \ref{lem: stable redaction}.  By 
the definition of $ S $, we can fix for each $ y 
\in S $ an  $ \boldsymbol{x_y}\in W\setminus 
\mathsf{span}_N(\mathcal{D}) $ and a $ \boldsymbol{J_y}\in \bigcup {[I_{x_y}]}{\uparrow_{\mathcal{D}}} $ such that  
$ y\in 
E(x_y, J_y) $.

\begin{observation}\label{obs: reach from y}
$ E(y, J)\subseteq W $ whenever $ y \in S $ and $ [J] \in {[J_y]}{\uparrow_{\mathcal{D}}} $.
\end{observation}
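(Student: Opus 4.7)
The plan is to show $E(y,J) \subseteq E(x_y, J)$ and then invoke the definition of $W$ directly. First, I would unpack the definitions: by the choice of $x_y$ and $J_y$ we have $x_y \in E \setminus \mathsf{span}_N(\mathcal{D})$, $[J_y] \in {[I_{x_y}]}{\uparrow_{\mathcal{D}}}$, and $y \in E(x_y, J_y)$. Since $[J] \in {[J_y]}{\uparrow_{\mathcal{D}}}$ we obtain $[J] \trianglerighteq [J_y] \trianglerighteq [I_{x_y}]$, so $[J] \in {[I_{x_y}]}{\uparrow_{\mathcal{D}}}$ as well.

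Next I would apply Corollary \ref{cor: rachability in D([I])} (with the base point $x_y$) to the pair $[J_y], [J] \in {[I_{x_y}]}{\uparrow_{\mathcal{D}}}$ to conclude $E(x_y, J_y) \subseteq E(x_y, J)$. In particular $y \in E(x_y, J)$, i.e.\ there is a directed $x_y$--$y$ path in $D(J)$. For any $z \in E(y, J)$, concatenating this path with a directed $y$--$z$ path in $D(J)$ produces a directed $x_y$--$z$ path in $D(J)$, so $z \in E(x_y, J)$. This gives the inclusion $E(y, J) \subseteq E(x_y, J)$.

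Finally, since $x_y \in E \setminus \mathsf{span}_N(\mathcal{D})$ and $[J] \in {[I_{x_y}]}{\uparrow_{\mathcal{D}}}$, the set $E(x_y, J)$ appears explicitly in the union defining $W$, hence $E(x_y, J) \subseteq W$. Combining the two inclusions yields $E(y, J) \subseteq W$, as required.

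There is no real obstacle here: the argument is essentially a transitivity-of-reachability calculation, and the only subtlety is making sure one has the right to apply Corollary \ref{cor: rachability in D([I])}, which is why the first step — verifying $[J] \in {[I_{x_y}]}{\uparrow_{\mathcal{D}}}$ so that $E(x_y, \cdot)$ behaves monotonically along ${[I_{x_y}]}{\uparrow_{\mathcal{D}}}$ — is crucial.
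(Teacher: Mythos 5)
Your proof is correct and follows essentially the same route as the paper: the paper also argues that $E(x_y,J)$ appears in the union defining $W$ and contains $y$ via Corollary \ref{cor: rachability in D([I])}, and the inclusion $E(y,J)\subseteq E(x_y,J)$ is implicit there as transitivity of reachability. You merely make explicit the (correct and worth stating) intermediate steps that $[J_y],[J]\in{[I_{x_y}]}{\uparrow_{\mathcal{D}}}$ so that the corollary applies.
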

\begin{proof}
The set  $ E(x_y, J) $ appears in the union that defines $ W $ and it contains $ y $ because so does $ E(x_y, J_y) $ and $ E(x_y, J) \supseteq 
E(x_y, J_y)$ by Corollary \ref{cor: rachability in D([I])}.
\end{proof}

\begin{proposition}\label{prop: G neglig}
$ G:= E\setminus W $ is negligible.
\end{proposition}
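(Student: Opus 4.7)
The plan is to verify negligibility of $G$ directly from the definition: for each finite $X \subseteq G$ and finite $Y \subseteq W$, I will exhibit an $M/Y$-independent set that $N$-spans $X$. The candidate is $J := I \setminus W_Y$, where $[I] \in \mathcal{D}$ is a carefully chosen class representative and $W_Y := \bigcup_{y \in Y} E(x_y, I)$ collects the reachability sets centred at witnesses $x_y \in E \setminus \mathsf{span}_N(\mathcal{D})$ of the $W$-membership of each $y \in Y$.

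\emph{Constructing $I$.} For each $y \in Y$ I fix $x_y$ and some $J_y$ with $[J_y] \in [I_{x_y}]\!\uparrow_{\mathcal{D}}$ and $y \in E(x_y, J_y)$; for $y \in E \setminus \mathsf{span}_N(\mathcal{D})$ I take $x_y = y$ and $J_y = I_y$, and for $y \in S$ I use the data fixed in the paragraph preceding the proposition. Since $E \setminus \mathsf{span}_N(\mathcal{D}) \subseteq W$, the finite set $X \subseteq G$ lies in $\mathsf{span}_N(\mathcal{D})$, and the finitariness of $N$ together with directedness of $\mathcal{D}$ produces an $[I_X] \in \mathcal{D}$ with $X \subseteq \mathsf{span}_N(I_X)$. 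Using directedness again, I pick $[I] \in \mathcal{D}$ dominating $[I_X]$, every $[J_y]$, and every $[I_{x_y}]$. Corollary \ref{cor: rachability in D([I])} then promotes $y \in E(x_y, J_y)$ to $y \in E(x_y, I)$, so $Y \subseteq W_Y$ and $J := I \setminus W_Y$ is disjoint from $Y$.

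\emph{Verifying the two properties of $J$.} For the $N$-spanning of $X$: any $x \in X \cap I$ automatically lies in $J$ because $x \in G \subseteq E \setminus W \subseteq E \setminus W_Y$, while for $x \in X \setminus I$ the $N$-circuit $C_N(x, I)$ must be disjoint from $W_Y$, since any $z \in (C_N(x, I) - x) \cap E(x_y, I)$ together with the second-type arc $z \to x$ in $D(I)$ would extend an $x_y$-to-$z$ path to an $x_y$-to-$x$ path, placing $x \in E(x_y, I) \subseteq W$ and contradicting $x \in G$. For the $M/Y$-independence of $J$: Lemma \ref{lem: E(x,I) M-span} gives $y \in \mathsf{span}_M(I \cap E(x_y, I)) \subseteq \mathsf{span}_M(I \setminus J)$ for every $y \in Y$, so $j \in \mathsf{span}_M((J - j) \cup Y)$ would force $j \in \mathsf{span}_M(I - j)$ and contradict $M$-independence of $I$; a standard rank count then yields $r_M(J \cup Y) = r_M(Y) + |J|$.

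The delicate step is the choice of $[I]$: one needs a single class in $\mathcal{D}$ whose representative simultaneously $N$-spans the whole of $X$ and captures every $y \in Y$ inside some $E(x_y, I)$ with $E(x_y, I) \subseteq W$. Directedness of $\mathcal{D}$ handles the first task, while the monotonicity in Corollary \ref{cor: rachability in D([I])} transfers the reachability data from the individual $J_y$'s to the common $I$; the inclusion $E(x_y, I) \subseteq W$ is then automatic from the definition of $W$ as soon as $[I] \in [I_{x_y}]\!\uparrow_{\mathcal{D}}$, which is exactly what the domination arranges.
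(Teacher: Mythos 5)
Your proof is correct and takes essentially the same approach as the paper: both pick a representative $I \in \bigcup\mathcal{D}$ whose class $N$-spans $X$ and dominates the relevant $[J_y]$, then exploit the separation between the reachability sets $E(x_y, I) \subseteq W$ and $X \subseteq G = E\setminus W$ to split the $N$-support of $X$ from the $M$-support of $Y$. The paper packages this by defining $J_X$ and $J_Y$ inside its representative and showing they are disjoint, whereas you take $J := I \setminus W_Y$ directly, but the underlying arc-based disjointness argument in $D(I)$ is the same.
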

\begin{proof}
Let finite sets $ X \subseteq G $ and $ 
Y\subseteq E\setminus G=W $ be given. By Observation \ref{obs: negliDef} we may assume without loss of generality that $ X $ does 
not contain $ N $-loops and $ Y $ does not contain $ M $-loops. Since $ E\setminus 
\mathsf{span}_N(\mathcal{D}) \subseteq W $ and $ G=E \setminus 
W 
$, we have $ 
G 
\subseteq \mathsf{span}_N(\mathcal{D})$. The sets $ X $ and $ Y $ are finite and $ \mathcal{D} $ is a directed set, hence we can pick a single $ 
J \in \bigcup \mathcal{D} $ such that 
$ \mathsf{span}_N(J)\supseteq X $ and $ J \trianglerighteq J_y $ for every $ y \in Y $. Then $ \mathsf{span}_M(J)\supseteq Y $ as well (see 
Lemma \ref{lem: E(x,I) M-span}). Let $ J_X $  be the smallest subset of $ J $ that $ N $-spans $ X $. Formally,  \[ 
J_X:=(J\cap X)\cup  \bigcup\{ C_N(x,J)-x:\ x \in 
X\setminus J 
\}. \]  By 
definition $  
\mathsf{span}_N(J_X) \supseteq X $. It remains to prove that $ J_X $ is $ M/Y $-independent. Let $ J_Y $ be the smallest subset of $ J $ that $ M 
$-spans $ Y $. Formally, 
 \[ J_Y:=(J \cap Y)\cup \bigcup\{ C_M(y,J)-y:\ y \in Y \setminus J \}. \] It is enough to show that $ J_X\cap 
 J_Y=\emptyset $ because then it follows that $ J_X $ is $ M/J_Y $-independent and hence  $ M/Y 
 $-independent. 
 
 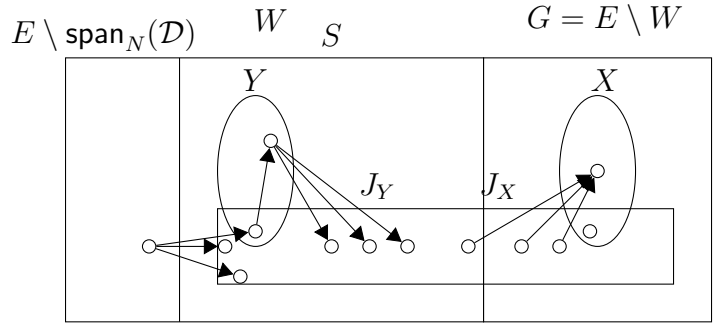
\begin{figure}[h]
 \centering
 
\begin{tikzpicture}

\draw  (-4.5,3.5) rectangle (1,0);
\draw  (1,3.5) rectangle (4,0);
\draw  plot[smooth, tension=.7] coordinates {(-3,3.5) (-3,0)};
\draw  (-2.5,1.5) rectangle (3.5,0.5);
\draw  (-2,2) ellipse (0.5 and 1);
\draw  (2.5,2) ellipse (0.5 and 1);
\node[circle,inner sep=0pt,draw,minimum size=5] (v9) at (1.5,1) {};
\node[circle,inner sep=0pt,draw,minimum size=5] (v11) at (2,1) {};
\node[circle,inner sep=0pt,draw,minimum size=5] (v10) at (2.5,2) {};
\node[circle,inner sep=0pt,draw,minimum size=5] (v6) at (-1,1) {};
\node[circle,inner sep=0pt,draw,minimum size=5] (v7) at (-0.5,1) {};
\node[circle,inner sep=0pt,draw,minimum size=5] (v8) at (0,1) {};
\node[circle,inner sep=0pt,draw,minimum size=5] (v2) at (-2.2,0.6) {};
\node[circle,inner sep=0pt,draw,minimum size=5] (v3) at (-2.4,1) {};
\node[circle,inner sep=0pt,draw,minimum size=5] (v5) at (-1.8,2.4) {};
\node[circle,inner sep=0pt,draw,minimum size=5] (v4) at (-2,1.2) {};
\node[circle,inner sep=0pt,draw,minimum size=5] at (2.4,1.2) {};
\node[circle,inner sep=0pt,draw,minimum size=5] (v1) at (-3.4,1) {};
\node[circle,inner sep=0pt,draw,minimum size=5] (v12) at (0.8,1) {};

\draw[-triangle 60]  (v1) edge (v2);
\draw[-triangle 60]  (v1) edge (v3);
\draw[-triangle 60]  (v1) edge (v4);
\draw[-triangle 60]  (v4) edge (v5);
\draw[-triangle 60]  (v5) edge (v6);
\draw[-triangle 60]  (v5) edge (v7);
\draw[-triangle 60]  (v5) edge (v8);
\draw[-triangle 60]  (v9) edge (v10);
\draw[-triangle 60]  (v11) edge (v10);
\draw[-triangle 60]   (v12) edge (v10);

\node at (-4,3.8) {$E\setminus \mathsf{span}_N(\mathcal{D})$};
\node at (-1.8,4) {$W$};
\node at (2.6,4) {$G=E\setminus W$};
\node at (-0.4,1.8) {$J_Y$};
\node at (1.2,1.8) {$J_X$};
\node at (-2,3.2) {$Y$};
\node at (2.6,3.2) {$X$};
\node at (-1,3.8) {$S$};

\end{tikzpicture}
 \caption{The negligibility of $ G $.} \label{fig: JX and JY}
 \end{figure}
\noindent It follows from 
  the definition of $ D(J) $ that $ X $ is reachable by a directed path from each element of $ J_X $ in $ D(J) $. Indeed, an $ x \in 
 J_X $ is either an element of $ X $ or $ D(J) $ has an arc of second type from $ x $ to $ X $. Similarly, $ J_Y $ 
 is reachable from 
 each element of $ Y $ since any $ y \in 
  Y $ is either an element of $ J_Y $ or $ D(J) $ has an arc of the first type from $ y $ to $ J_Y $. Suppose for a contradiction that $ J_X \cap 
  J_Y\neq 
  \emptyset $.  Then there exists a directed path (of length at most two) from $ Y $ to $ X $ in $ D(J) $. But this is a 
  contradiction  because  $ E(y,J)\subseteq W $ for $ y \in Y $  by Observation 
    \ref{obs: reach from y} while $ 
      X\subseteq G=E\setminus W $.  We conclude $ J_X\cap J_Y=\emptyset $ which completes the proof.
\end{proof}

\begin{observation}\label{obs: L spanned by S}
$ S $ spans $ W $ in $ M $.
\end{observation}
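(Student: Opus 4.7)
The plan is to unpack the definitions of $W$ and $S$ and apply Lemma \ref{lem: E(x,I) M-span} pointwise. The statement is really a direct consequence of the setup of the previous subsection, and the only content is to verify that the generators produced by Lemma \ref{lem: E(x,I) M-span} actually lie in $S$.

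More concretely, I would fix an arbitrary $w\in W$. By the definition of $W$, there exist $x\in E\setminus \mathsf{span}_N(\mathcal{D})$ and $[I]\in {[I_x]}{\uparrow_{\mathcal{D}}}$ with $w\in E(x,I)$. Applying Lemma \ref{lem: E(x,I) M-span} to $[I]$ gives
\[
w\in \mathsf{span}_M\bigl(I\cap E(x,I)\bigr),
\]
so it suffices to show $I\cap E(x,I)\subseteq S$. On the one hand, $I\cap E(x,I)\subseteq E(x,I)\subseteq W$ directly from the definition of $W$. On the other hand, since $[I]\in \mathcal{D}$ and $I\in \mathcal{I}_M\cap \mathcal{I}_N$, we have $I\subseteq \mathsf{span}_N(I)\subseteq \mathsf{span}_N(\mathcal{D})$, hence $I\cap E(x,I)\subseteq \mathsf{span}_N(\mathcal{D})$. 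Combining the two inclusions yields $I\cap E(x,I)\subseteq W\cap \mathsf{span}_N(\mathcal{D})=S$, and therefore $w\in \mathsf{span}_M(S)$.

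Since $w\in W$ was arbitrary, this shows $W\subseteq \mathsf{span}_M(S)$, which is the claim. There is no real obstacle here: the heavy lifting was done in Lemma \ref{lem: E(x,I) M-span} (which already guaranteed that each set of the form $E(x,I)$ is $M$-spanned by its intersection with $I$), and the remaining step is the straightforward observation that elements of $I$ automatically lie in $\mathsf{span}_N(\mathcal{D})$, which is exactly what is needed to place $I\cap E(x,I)$ inside $S$.
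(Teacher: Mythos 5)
Your proof is correct. It does differ slightly in organization from the paper's: the paper only treats $w\in W\setminus S=E\setminus\mathsf{span}_N(\mathcal{D})$ (the case $w\in S$ being trivial) and gives a self-contained one-step argument there, namely that $w$ cannot form a singleton augmenting path for $I_w$, hence $C_M(w,I_w)+w$ exists and $C_M(w,I_w)-w=\delta^+_{D(I_w)}(w)\subseteq E(w,I_w)\cap I_w\subseteq W\cap\mathsf{span}_N(\mathcal{D})=S$. You instead handle all $w\in W$ uniformly by invoking Lemma~\ref{lem: E(x,I) M-span} and then showing $I\cap E(x,I)\subseteq S$ via the two inclusions $E(x,I)\subseteq W$ and $I\subseteq\mathsf{span}_N(I)\subseteq\mathsf{span}_N(\mathcal{D})$. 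The paper's version is essentially the base case of the mechanism already packaged in Lemma~\ref{lem: E(x,I) M-span}, so yours makes the dependence on that lemma explicit and avoids re-deriving the circuit step; the paper's version avoids citing the lemma and is self-contained but needs the case split on $w\in S$ versus $w\notin S$. Both are valid, and your reduction to $I\cap E(x,I)\subseteq S$ is exactly the right observation.
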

\begin{proof}
Let $ x \in W\setminus S=E\setminus \mathsf{span}_N(\mathcal{D}) $ be given. We may assume that $ x $ is not an $ M $-loop, since otherwise 
we are done. Then $ N^{+}_{D(I_x)}(x)+x $ must be an $ M $-circuit because $ \{ x \} $ cannot be an augmenting path 
for $ I_x $. We have 
$N^{+}_{D(I_x)}(x)\subseteq W $ by the definition of $ W $. Furthermore, $ N^{+}_{D(I_x)}(x)\subseteq I_x \subseteq 
\mathsf{span}_N(\mathcal{D}) $. Therefore $  N^{+}_{D(I_x)}(x)\subseteq W\cap \mathsf{span}_N(\mathcal{D}) 
=S  $.
\end{proof}

\begin{proof}[Proof of Lemma \ref{lem: stable redaction}]
Since there is no non-trivial negligible set, Proposition \ref{prop: G neglig} implies that $ E=W $ and thus $ 
S=\mathsf{span}_N(\mathcal{D}) $. 
Let $ x \in S $ be given and take $ I_0:=J_{x} $.  If $ I_n \in \bigcup \mathcal{D} $ 
is already defined, then by using the fact that $ 
\mathcal{D} $ is a directed set, we take an $ [I_{n+1}] \in 
{[I_n]}{\uparrow_{\mathcal{D}}} $ such that $ [I_{n+1}] \trianglerighteq [J_y] $ for every $ y \in I_n $. By Lemma 
\ref{lem: reach some J'}, $ I_{n+1} $ can be 
chosen to be an element of 
 $ \mathcal{A}(I_n)$. The recursion is done. First, we show that the sequence $ (I_n) $ 
is convergent, i.e.  
\[  \bigcup_{m \in \mathbb{N}} \bigcap_{n>m} I_n=\bigcap_{m \in \mathbb{N}} \bigcup_{n>m} I_n. \]
In other words, the sequence of characteristic functions of the sets $ I_n $ converges pointwise. Indeed, if $ y \in I_n $, then 
$\mathcal{D}\ni [I_{n+1}] 
\trianglerighteq [J_y] $ by construction. Thus $ y \in E(x_y, I_{n+1}) $ and Lemma \ref{lem: E(x, I) increasing} guarantees 
that for $ m \geq n+1 $ we have $ y \in I_m 
$ iff 
$ y \in I_{n+1} $. 

Let $ I $ be the limit of the sequence $ (I_n) $. 
Clearly $ I \in \mathcal{I}_M\cap \mathcal{I}_N $, because $ M $ and $ N $ are finitary.
It follows from the construction via Lemmas \ref{lem: E(x, I) increasing} and \ref{lem: E(x,I) M-span} that $ \bigcup_{n \in \mathbb{N}} I_n 
\subseteq 
\mathsf{span}_M(I) $. In particular, $ x \in \mathsf{span}_M(I) $. 

It remains to prove that $ I $ is $ (M\!\!\upharpoonright\!\!S, N\!\!\upharpoonright\!\! S) $-stable. Suppose for a contradiction that it is 
not and pick a minimal $ J \in    (\mathcal{I}_{M\!\upharpoonright\!S/I}\cap \mathcal{I}_{N\!\upharpoonright\! S})\setminus  
\mathcal{I}_{{N\!\upharpoonright\! S}/I} $. By the minimality, $ I \cup J $ contains a unique $ N $-circuit $ C $ 
and $ J=C\setminus I $ (see Figure \ref{fig: I is stable}). Since $ 
I $ is $ 
N $-independent, $  J\neq \emptyset $, thus we can pick a $ z \in J $.  Fix an $ n $ for which $ I_n \trianglerighteq J_y $ for every $ y 
\in I \cap C $, in particular $C \cap 
I\subseteq I_n $.  Then $ K:= I_n \cup C-z \in \mathcal{I}_M\cap \mathcal{I}_N $ and $ z \in E \setminus \mathsf{span}_M(K) $. 
Furthermore, $ 
yz \in D(K) $ for every $ y \in I \cap C $. Hence for every $ y \in I \cap C $, there exists an $ x_yz $-augmenting path  for $ K $. It is enough to 
show 
that $ K \in \bigcup\mathcal{D} $  because then the existence of such an augmenting path contradicts
the definition of $ I_{x_y} $. It suffices to find a $ [K'] \in \mathcal{D} $ with $ [K']\trianglerighteq [K]  
$ because then Observation \ref{obs: nonempty dowclosed D} ensures $ [K]\in \mathcal{D} $.  The set $ C\setminus I_n 
\subseteq S=\mathsf{span}_N(\mathcal{D}) $ is finite, thus we can pick un upper bound $ [K']\in \mathcal{D} $ for $ [J_y]\ (y\in 
C\setminus I_n) $ and $ [I_n] $  that $ N $-spans $ C\setminus I_n $. Then this $ [K'] $ also $ M $-spans each $ 
y\in  C\setminus I_n $ because $  [K'] \trianglerighteq [J_y] $ (see the definition of $ J_y $ and Lemma \ref{lem: E(x,I) 
M-span}), thus $ [K'] \trianglerighteq [K]$ as desired.

 \begin{figure}[h]
 \centering
 
\begin{tikzpicture}

\draw  (-0.8,0.8) rectangle (4.6,-1);
\draw  plot[smooth, tension=.7] coordinates {(3,1.4) (1.4,1.2) (1.4,0) (2.8,-0.2) (3.6,0.4) (3,1.4)};
\node[circle,inner sep=0pt,draw,minimum size=5] (v1) at (-0.8,1.4) {};
\node[circle,inner sep=0pt,draw,minimum size=5] (v2) at (-0.2,0.2) {};
\node[circle,inner sep=0pt,draw,minimum size=5] (v3) at (0.4,1.4) {};
\node [circle,inner sep=0pt,draw,minimum size=5] (v5) at (3,1.4) {};
\node[circle,inner sep=0pt,draw,minimum size=5] (v4) at (1.4,0) {};
\node[circle,inner sep=0pt,draw,minimum size=5] (v7) at (2.8,-0.2) {};
\node[circle,inner sep=0pt,draw,minimum size=5] (v6) at (2.2,-0.2) {};
\node[circle,inner sep=0pt,draw,minimum size=5] at (1.4,1.2) {};
\node[circle,inner sep=0pt,draw,minimum size=5] at (2.2,1.4) {};

\draw[-triangle 60]  (v1) edge (v2);
\draw[-triangle 60]  (v2) edge (v3);
\draw[-triangle 60]  (v3) edge (v4);
\draw[-triangle 60]  (v4) edge (v5);
\draw[-triangle 60]  (v6) edge (v5);
\draw[-triangle 60]  (v7) edge (v5);

\node at (0.8,-0.6) {$I_n$};
\node at (-0.5,1.4) {$x_y$};
\node at (1.4,-0.3) {$y$};
\node at (3.2,1.4) {$z$};
\node at (1.8,1.6) {$J$};
\node at (3.2,0.2) {$C$};
\end{tikzpicture}
 \caption{The stability of $ I $.} \label{fig: I is stable}
 \end{figure}
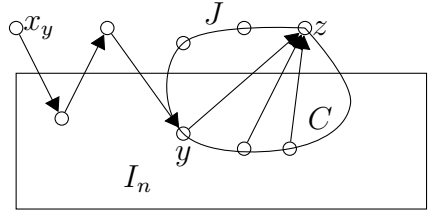
\noindent Therefore $ I $ is indeed $ (M\!\!\upharpoonright\!\!S, N\!\!\upharpoonright\!\! S) $-stable which concludes the proof of 
    Lemma \ref{lem: stable redaction}. Thus the proof of Theorem \ref{t: main} is also complete.
\end{proof}
\printbibliography
\end{document}